\newtheorem*{Ack}{Acknowledgements}
\newtheorem{thm}{Theorem}[section]
\newtheorem{cor}[thm]{Corollary}
\newtheorem{lem}[thm]{Lemma}
\newtheorem{prop}[thm]{Proposition}
\theoremstyle{definition}
\newtheorem{defn}[thm]{Definition}
\theoremstyle{remark}
\newtheorem{rem}[thm]{Remark}
\numberwithin{equation}{section}
\newcommand{\Z}{\mathbf{Z}}
\newcommand{\N}{\mathbf{N}}
\newcommand{\F}{\mathbf{F}}
\newcommand{\Q}{\mathbf{Q}}
\newcommand{\OO}{\mathcal{O}}
\newcommand{\SL}{\textnormal{SL}}
\newcommand{\kk}{\mathbf{k}}
\newcommand{\kl}{\mathbf{l}}
\title{Local-to-global rigidity of Bruhat-Tits  buildings}
\date{\today}
\begin{document}
\author[De La Salle]{Mikael De La Salle} \address{UMPA, ENS-Lyon\\ Lyon\\FRANCE}\email{mikael.de.la.salle@ens-lyon.fr}

\author[Tessera]{Romain Tessera} \address{Laboratoire de Math\'ematiques, Universit\'e Paris-Sud 11\\Orsay\\FRANCE}\email{romtessera@gmail.com}

\begin{abstract} 
A vertex-transitive graph $X$ is called local-to-global rigid if there exists $R$ such that every other graph whose balls of radius $R$ are isometric to the balls of radius $R$ in $X$ is covered by $X$. Let $d\geq 4$. We show that the $1$-skeleton of an affine Bruhat-Tits building of type $\widetilde A_{d-1}$ is local-to-global rigid if and only if the underlying field has characteristic $0$. For example the Bruhat-Tits building of $\SL(d,\F_p((t)))$ is not local-to-global rigid, while the Bruhat-Tits building of $\SL(d,\Q_p)$ is local-to-global rigid. \end{abstract}

\maketitle

A vertex-transitive graph $X$ is called local-to-global rigid (LG-rigid) if there exists $R$ such that every other graph whose balls of radius $R$ are isometric to the balls of radius $R$ in $X$ is covered by $X$. This notion was introduced by Benjaminin and Georgakopoulos and investigated in \cite{BE,G,ST15}. It follows from these works that in many cases, Cayley graphs of finitely presented groups are LG-rigid: for instance all Cayley graphs of torsion-free lattices in simple Lie groups, or Cayley graphs of torsion-free groups of polynomial growth. We also proved (\cite{ST15}) that every finitely presented group which is not a quotient of a Burnside group admits  LG-rigid Cayley graphs. On the other hand, in \cite{ST15}, we constructed many examples  of such graphs which are not LG-rigid: e.g.\ a Cayley graph of $F_2\times F_2\times \Z/2\Z$. In this article we investigate LG-rigidity for $1$-skeletons of Bruhat-Tits buildings. 

By \emph{non-archimedean local field} we will mean a locally compact discrete valuation field (not necessarily commutative). If $K$ is a non-archimedean local field and $d \geq 3$, we denote by $X_d(K)$ the Bruhat-Tits building of type $\widetilde A_{d-1}$ constructed from $K$. Our main result characterizes,  for $d \geq 4$, the fields for which $X_d(K)$ is LG-rigid.

\begin{thm}\label{thm:main} Let $K$ be a \emph{non-archimedean local field}. If $K$ has positive characteristic and $d \geq 3$, then $X_d(K)$ is not LG-rigid. By contrast, if $K$ has characteristic $0$ and $d \geq 4$, then $X_d(K)$ is LG-rigid.
\end{thm}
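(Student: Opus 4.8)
\medskip
\noindent\textbf{Proof strategy.} The plan is to reduce both implications to a field-theoretic dichotomy about the truncations $\OO_K/\mathfrak m_K^{\,n}$, and then to settle that dichotomy.

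\emph{Reduction to fields.} I would first isolate the following bookkeeping. (i) For $R$ larger than the diameter of a chamber, a graph $Y$ all of whose $R$-balls are isometric to those of the $1$-skeleton of $X_d(K)$ is itself the $1$-skeleton of a simplicial complex $\widehat Y$ (reconstruct the simplices inside the balls), and $\widehat Y$ is $R'$-locally isometric to $X_d(K)$ for some $R'\sim R$. (ii) The $R$-ball of $X_d(K)$ around a special vertex, as a pointed graph, is governed by the submodule structure of $(\OO_K/\mathfrak m_K^{\,N(R)})^d$ for some $N(R)$ comparable to $R$; since $d\geq 3$, the coordinatization theorem for projective geometries of dimension $\geq 2$ recovers from this ball the ring $\OO_K/\mathfrak m_K^{\,N(R)}$, and conversely an isomorphism $\OO_K/\mathfrak m_K^{\,m}\cong\OO_{K'}/\mathfrak m_{K'}^{\,m}$ with $m\geq N(R)$ yields an isometry of $R$-balls. (iii) For $d\geq 4$, a $2$-spherical local-to-global principle for chamber systems shows that the universal cover $\widetilde Y$ of $\widehat Y$ is an affine building of type $\widetilde A_{d-1}$; its building at infinity has type $A_{d-1}$ of rank $\geq 3$, hence is the projective geometry of a division ring $K'$ (fundamental theorem of projective geometry), on which Bruhat--Tits theory induces a discrete valuation with $\widetilde Y\cong X_d(K')$; finiteness of the residues of $X_d(K)$ forces $K'$ to be a non-archimedean local (division) ring with residue field that of $K$. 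Since affine buildings are contractible, $X_d(K)$ covers $Y$ iff $X_d(K)\cong\widetilde Y$, i.e.\ iff $\OO_K/\mathfrak m_K^{\,m}\cong\OO_{K'}/\mathfrak m_{K'}^{\,m}$ for all $m$; in particular $X_d(K)$ is LG-rigid as soon as some single truncation $\OO_K/\mathfrak m_K^{\,n}$ determines $K$ among non-archimedean local division rings with the given residue field.

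\emph{Positive characteristic (any $d\geq 3$).} Let $K$ have characteristic $p$ and residue field $\F_q$; in the commutative case $K=\F_q((t))$. For each $n$ I would exhibit a characteristic-$0$ competitor $K'_n$ with $\OO_{K'_n}/\mathfrak m^{\,n}\cong\OO_K/\mathfrak m_K^{\,n}$: take $K'_n=\Q_q(\pi)$ with $\pi^{n}=p$, where $\Q_q$ is the unramified extension of $\Q_p$ with residue field $\F_q$, so that $\OO_{K'_n}=W(\F_q)[\pi]/(\pi^{n}-p)$ and $p=\pi^{n}\equiv 0$ modulo $\mathfrak m^{\,n}$, giving $\OO_{K'_n}/\mathfrak m^{\,n}\cong\F_q[\pi]/(\pi^{n})\cong\OO_K/\mathfrak m_K^{\,n}$ (in the non-commutative case replace $K'_n$ by a suitably ramified central division algebra of the same index and residue datum). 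By (ii) the $R$-balls of $X_d(K'_n)$ and $X_d(K)$ agree for $R\lesssim n$, while for $m>n$ the ring $\OO_{K'_n}/\mathfrak m^{\,m}$ has characteristic $p^{\lceil m/n\rceil}>p$, so by (ii) again $X_d(K'_n)\not\cong X_d(K)$; both being simply connected, $X_d(K)$ does not cover $X_d(K'_n)$. As $n\to\infty$ the radius tends to infinity, so $X_d(K)$ is not LG-rigid. No classification is used, which is why $d\geq 3$ suffices; for $d=2$, $X_2(K)$ is the $(q+1)$-regular tree and the construction correctly collapses.

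\emph{Characteristic $0$ ($d\geq 4$).} By the reduction it suffices to show: if $\mathrm{char}\,K=0$ with residue field $\F_q$, there is $N_0=N_0(K)$ such that any complete discretely valued division ring $D'$ with residue field $\F_q$ and $\OO_{D'}/\mathfrak m^{\,N_0}\cong\OO_K/\mathfrak m_K^{\,N_0}$ is isomorphic to $K$. Since $\mathrm{char}\,D'=0$ gives $v(p)<\infty$, the ring $\OO_{D'}$ is complete with finite residue field and $\OO_{D'}/p$ of finite length, hence finite over $\Z_p$; and were $D'$ non-commutative its residue field would be the proper extension $\F_{q^{\mathrm{ind}(D')}}$ of $\F_q$, so $D'=K'$ is commutative, a finite totally ramified extension of $\Q_q$. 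Its ramification $e(K')$ is read off from $\mathrm{char}\!\big(\OO_{K'}/\mathfrak m^{\,N_0}\big)=p^{\lceil N_0/e(K')\rceil}$, which for $N_0$ large forces $e(K')=e(K)$; and among the finitely many totally ramified extensions of $\Q_q$ of fixed degree $e(K)$, a truncation of level $N_0$ (depending only on $e(K)$, via continuity of roots/Krasner's lemma) separates the isomorphism classes. Hence $K'\cong K$ and $X_d(K)$ is LG-rigid.

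\emph{Main difficulty.} The substantial work is the geometric reduction: a local-to-global characterization of affine $\widetilde A_{d-1}$-buildings with \emph{explicit, uniform} control on the radius $R$ in (i) and (iii) — together with a clear account of why it must fail at $d=3$, where non-classical $\widetilde A_2$-buildings obstruct the passage from ``locally a building'' to $X_3(K')$ — and the coordinatization bookkeeping of (ii) pinning $N(R)$ to $R$. The characteristic-$0$ field statement is comparatively soft, but reducing a general valued division ring to the commutative case, and building the non-commutative analogue of the competitor in characteristic $p$, both require some care with ramification.
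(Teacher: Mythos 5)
Your overall architecture matches the paper's (local recognition of $\widetilde A_{d-1}$-buildings for $d\geq 4$ via Tits, approximation of positive-characteristic fields by characteristic-$0$ ones for non-rigidity, isolation of characteristic-$0$ fields for rigidity), but there is a genuine gap at the linchpin of your reduction, namely the ``hard'' direction of your step (ii): the claim that the $R$-ball of $X_d(K)$ determines the truncated ring $\OO^K/\mathfrak m^{N(R)}$ with $N(R)\to\infty$. The ball is not a projective geometry over a division ring but (via Lemma \ref{lem:building_from_residue_ring}) the poset of submodules of $(\mathcal O_R)^d$, i.e.\ a Hjelmslev-type geometry over the local ring $\mathcal O_R$; the fundamental/coordinatization theorem of projective geometry you invoke does not apply, and one would additionally have to recover the type structure from the bare graph. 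This is exactly the statement the paper does not know how to prove directly (it remarks that it could not even decide whether $K\mapsto X_d(K)$ is isometric); it only proves the easy direction (Corollary \ref{cor:Tits}) and replaces the local recovery by a compactness argument: $K\mapsto X_d(K)$ is continuous, injective by Tits' global theorem that the building determines the field, and proper, hence a homeomorphism onto its image, which combined with isolation of characteristic-$0$ fields (Theorem \ref{thm:distance_nalf}) produces the radius $R$ you need. Your characteristic-$0$ argument rests entirely on the unproved local recovery; your characteristic-$p$ argument also uses it to get $X_d(K'_n)\not\cong X_d(K)$, though there it can be repaired by citing the global injectivity (building at infinity plus fundamental theorem of projective geometry), which is how the paper argues.

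Two secondary points. First, in characteristic $0$ you only treat commutative $K$ (``a finite totally ramified extension of $\Q_q$''), whereas the theorem allows non-commutative $K$, and even for commutative $K$ the classification of rank-$\geq 3$ affine buildings forces you to rule out non-commutative competitors; your argument for commutativity of $D'$ is also off as stated (having residue field $\F_q$ does not force commutativity --- what does work is that for $N_0\geq 2$ the truncation $\OO^{D'}/\mathfrak m^{N_0}$ of a non-commutative local division ring is non-commutative, or, as in the paper, one simply keeps track of the finitely many types $(p,f,e,d,r)$ and their truncations, Lemma \ref{lem:BallLemma}). Second, your step (iii) glosses over how a bare graph that is only locally a building acquires the type function needed to apply Tits' local characterization; this is the content of Proposition \ref{prop:geometry_local} in the paper, and together with the passage from an arbitrary graph $Y$ to a $2$-complex universal cover (handled there by citing earlier work) it is part of the ``main difficulty'' you acknowledge but do not carry out.
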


Let us discuss the proof of Theorem \ref{thm:main}. There is a natural locally compact Hausdorff topology on the isomorphism classes of non-archimedean local fields where two fields are $R$-close if their residue rings $\mathcal O/\pi^R \mathcal O$ are isomorphic. For example, $\Q_p[p^{1/R}]$ and $\F_p((t))$ are $R$-close. Indeed, the elements of their rings of integers have a unique representation as a formal series $\sum_{n\geq 0}^\infty a_n t^n$ with $a_n \in \{0,1,\dots,p-1\}$ and as a formal series $\sum_{n \geq 0} a_n (p^{1/R})^n$ with $a_n \in \{0,1,\dots,p-1\}$ respectively. These two rings have therefore ``the same elements'', but the operations are different (without carry for $F_p((t))$ and with carry for $\Q_p[p^{1/R}]$). However when $R$ becomes large, the difference becomes smaller and smaller as the carry is sent at distance $R$, and in particular $\Q_p[p^{1/R}]$ and $\F_p((t))$ are $R$-close. 

Similarly there is a locally compact Hausdorff topology on the isometry classes of vertex-transitive locally finite graphs where two graphs are $R$-close if they have the same balls of radius $R$. The idea of the proof of Theorem \ref{thm:main} is simple and can be summarized as follows.
\begin{enumerate}[(i)]
\item\label{item:continuity} For these topologies, the map $K \mapsto X_d(K)$ is a homeomorphism on its image.
\item\label{item:krasner} A non-archimedean local field is isolated if and only if it has characteritic $0$. 
\item\label{item:buildings_open} If $d \geq 4$, $\{X_d(K), K \;\textrm{non-archimedean local field}\}$ is open in the set of large-scale simply connected graphs.
\end{enumerate}
 The first statement in Theorem \ref{thm:main} is immediate from the continuity of $X_d$ in (\ref{item:continuity}) and from (\ref{item:krasner}). The second statement follows from (\ref{item:continuity}--\ref{item:buildings_open}) and our work \cite{ST15}.

We prove in Corollary \ref{cor:Tits} that the map $K \mapsto X_d(K)$ is continuous (actually $1$-Lipschitz for the natural distances). Since it is injective by a deep Theorem of Tits and clearly proper, (\ref{item:continuity}) follows. We could not find a direct proof of (\ref{item:continuity}); for example we could not decide whether $X_d$ is isometric. The point (\ref{item:krasner}) is classical, at least as far as commutative fields are concerned \cite{Krasner,Deligne}; its simplest illustration is, as recalled above, that $\F_p((t))$ is the limit of $\Q_p[p^{1/R}]$ as $R \to \infty$. We recall this in \S \ref{section:fields}. The meaning of (\ref{item:buildings_open}) is made precise in Corollary \ref{cor:buildings_local}; it is proved as a consequence of other deep results of Tits which give a local characterization of the graphs $X_d(K)$ among graphs with a special kind of labelling of the vertices called a \emph{geometry of type $\widetilde{A}_{d-1}$} and from our Proposition \ref{prop:geometry_local} where we show that such a labelling can be recovered locally.

The reason why we allow non-commutative fields is not to make the exposition hard to follow: even if we were only interested in commutative fields, we would have to work with non-commutative fields in the proof of the second statement. Indeed, we do not know of a direct proof showing that $\{ X_d(K), K\textrm{ commutative}\}$ is open (this is true \emph{a posteriori} because the set of commutative non-archimedean local fields is open in the space of all commutative non-archimedean local fields).

Let us state two consequences. The following is a consequence of  \cite[Corollary 1.6]{ST15} and of Theorem \ref{thm:main} (or rather of (\ref{item:continuity}) and of the convergence of $\Q_p[p^{1/R}]$ to $\F_p((t))$).
\begin{prop}\label{cocompact}
Let $N\in \N$, then for all but finitely many $R\in \N$, the building $X_d(\Q_p[p^{1/R}])$ does not admit any discrete group of isometry $\Gamma$ such that 
\begin{itemize}
\item the cardinality of the vertex set of the quotient graph  $X(\Q_p[p^{1/R}])/\Gamma$ is at most $N$;
\item for all vertex $x$ of $X_d(\Q_p[p^{1/R}])$, the stabilizer of $x$ in $\Gamma$ has cardinality at most $N$.
\end{itemize}
\end{prop}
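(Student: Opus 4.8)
The plan is to derive Proposition \ref{cocompact} by combining the convergence $\Q_p[p^{1/R}] \to \F_p((t))$ with the fact, recorded in \cite[Corollary 1.6]{ST15}, that the set of vertex-transitive graphs admitting a discrete isometry group with both the quotient and the vertex-stabilizers of cardinality at most $N$ is closed in the local topology on vertex-transitive locally finite graphs. More precisely, suppose for contradiction that there is an infinite set $S \subseteq \N$ such that for each $R \in S$ the building $X_d(\Q_p[p^{1/R}])$ admits a discrete isometry group $\Gamma_R$ satisfying the two bulleted conditions for the given $N$. Since the balls of a fixed radius in $X_d(\Q_p[p^{1/R}])$ take only finitely many isometry types (the degree of $X_d(K)$ depends only on the residue field, which is $\F_p$ here), after passing to a further infinite subset we may assume the graphs $X_d(\Q_p[p^{1/R}])$, $R \in S$, converge in the local topology; by point (\ref{item:continuity}) and the convergence $\Q_p[p^{1/R}] \to \F_p((t))$ (with the properness of $K \mapsto X_d(K)$), the limit is $X_d(\F_p((t)))$.

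Next I would invoke \cite[Corollary 1.6]{ST15}: the property ``admits a discrete isometry group with quotient of size $\leq N$ and vertex-stabilizers of size $\leq N$'' passes to limits of vertex-transitive graphs in the local topology. Hence $X_d(\F_p((t)))$ would itself admit such a group $\Gamma$. But $X_d(\F_p((t)))$ is the Bruhat-Tits building of $\SL(d,\F_p((t)))$, and such a building admits no cocompact lattice with bounded torsion: indeed $\F_p((t))$ has positive characteristic, so $\SL(d,\F_p((t)))$ and more generally its cocompact lattices contain elements of order $p$ with arbitrarily large fixed-point sets, or — more robustly — one uses that a group acting cocompactly on $X_d(\F_p((t)))$ with the stated bounds would be a finitely generated virtually torsion-free (in fact boundedly-torsion) group of cohomological dimension forcing it to be, up to finite index, a lattice in $\SL(d,\F_p((t)))$, which has no torsion-free lattices. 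This contradiction with the assumption for infinitely many $R$ proves the Proposition.

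The main obstacle is the last step: ruling out \emph{any} discrete isometry group of $X_d(\F_p((t)))$ with quotient and stabilizers bounded by $N$, rather than merely ruling out lattices of $\SL(d,\F_p((t)))$. The cleanest route is probably to argue directly at the level of the graph: a discrete group $\Gamma$ acting on $X_d(\F_p((t)))$ with finite quotient and finite stabilizers is finitely generated and quasi-isometric to the building, hence (for $d \geq 3$) has one end and satisfies property (T)-type vanishing or Poincar\'e duality constraints that are incompatible with being boundedly torsion; alternatively, one exhibits directly inside any cocompact lattice of $\SL(d,\F_p((t)))$ a unipotent element whose centralizer, hence whose fixed-point set, is unbounded, so that no uniform bound $N$ on stabilizers can hold for the pushed-forward action. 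I would present whichever of these is shortest given the tools already cited, and note that this is exactly where positive characteristic enters, since in characteristic $0$ the group $\SL(d,\Q_p)$ does admit torsion-free cocompact lattices. Once this step is in place, the rest is a routine compactness-and-continuity argument built on (\ref{item:continuity}) and \cite[Corollary 1.6]{ST15}.
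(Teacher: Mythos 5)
Your reduction has the logic of \cite[Corollary 1.6]{ST15} backwards, and the contradiction you aim for at the limit graph is based on a false claim. You want to push the bounded action to the local limit $X_d(\F_p((t)))$ and then argue that this building admits no discrete isometry group with quotient and vertex-stabilizers bounded by $N$. But it does: for $d=3$ the paper itself recalls (via \cite{CMSZ}) that $X_3(\F_p((t)))$ carries a group acting simply transitively on vertices, i.e.\ it is a Cayley graph (quotient a single vertex, trivial stabilizers), and for general $d$ the positive-characteristic buildings admit cocompact lattices (Cartwright--Steger, division-algebra lattices), which are discrete with finitely many orbits and finite stabilizers. So no contradiction can arise at the limit, for any sufficiently large $N$ --- and the Proposition is claimed for every $N$. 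The auxiliary assertions in your last paragraph (``$\SL(d,\F_p((t)))$ has no torsion-free lattices'', property (T)/Poincar\'e duality obstructions, unipotent centralizers forcing unbounded stabilizers) are unsubstantiated and partly false, and in any case cannot repair the argument, since the statement you would need (``$X_d(\F_p((t)))$ admits no such $\Gamma$'') is simply not true. Relatedly, \cite[Corollary 1.6]{ST15} is not a closedness-under-local-limits statement for the class of graphs with bounded actions; if it were, it would give nothing here, precisely because the limit does admit such actions.

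The intended argument runs in the opposite direction: \cite[Corollary 1.6]{ST15} is a rigidity statement to be applied with $X=X_d(\F_p((t)))$ (a large-scale simply connected graph with a cocompact ``bounded'' action). It yields, for the given $N$, a radius $R_N$ such that any graph which is $R_N$-locally $X_d(\F_p((t)))$ and admits a discrete isometry group with at most $N$ orbits and stabilizers of cardinality at most $N$ must be covered by $X_d(\F_p((t)))$. Since $\Q_p[p^{1/R}]$ and $\F_p((t))$ are $R$-close, Corollary \ref{cor:Tits} (point (\ref{item:continuity})) shows $X_d(\Q_p[p^{1/R}])$ is $R$-locally $X_d(\F_p((t)))$; so for $R\geq R_N$, if it admitted such a group $\Gamma$ it would be covered by $X_d(\F_p((t)))$, hence isometric to it (both $1$-skeletons are $2$-simply connected, so the covering is trivial), contradicting the injectivity of $K\mapsto X_d(K)$ given by Tits' theorems, as $\Q_p[p^{1/R}]$ has characteristic $0$ and $\F_p((t))$ characteristic $p$. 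Thus only the finitely many $R<R_N$ can carry such actions. In short: the rigidity is used against the characteristic-zero approximants, not against the positive-characteristic limit, and your proposal as written does not prove the statement.
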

In particular, for all but finitely many $R\in \N$, $X_d(\Q_p[p^{1/R}])$ is not a Cayley graph. For $d=3$, $X_d(\F_p((t)))$ turns out to have a group of isometries acting simply transitively on its vertex set \cite{CMSZ}. Hence $X_d(\F_p((t)))$ can be seen as a Cayley graph of this group. In particular, Theorem \ref{thm:main} yields to new examples of Cayley graphs of finitely presented groups which are not LG-rigid.

\begin{thm}\label{prop:Building}
There exists a Cayley graph $X$ of some finitely presented group, and for each $R>0$, a $2$-simply connected vertex transitive graph $Y_R$ which is $R$-locally $X$, but is not even quasi-isometric to $X$. In particular $X$ is not LG-rigid.
\end{thm}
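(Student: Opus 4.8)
The plan is to deduce Theorem~\ref{prop:Building} from Theorem~\ref{thm:main} together with the facts recalled in the introduction, by exhibiting an explicit family of approximating graphs. First I would specialize to a convenient positive-characteristic example. Take $d=3$, fix a prime $p$, and set $X = X_3(\F_p((t)))$; by \cite{CMSZ} this graph admits a group of isometries acting simply transitively on its vertex set, so $X$ is the Cayley graph of a finitely presented group (the group is finitely presented because it acts geometrically on the large-scale simply connected complex $X_3$). This provides the ``Cayley graph of a finitely presented group'' in the statement.

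Next I would produce the approximating graphs $Y_R$. The natural candidate is $Y_R = X_3(\Q_p[p^{1/R'}])$ for a suitable $R' = R'(R)$ chosen large enough. By the convergence $\Q_p[p^{1/N}] \to \F_p((t))$ as $N\to\infty$ in the topology on local fields (point (\ref{item:krasner}) and the discussion around it) and by the $1$-Lipschitz continuity of $K\mapsto X_d(K)$ (point (\ref{item:continuity}), i.e.\ Corollary~\ref{cor:Tits}), for every $R$ one can choose $R'$ so that the balls of radius $R$ in $Y_R = X_3(\Q_p[p^{1/R'}])$ are isometric to those of $X$; that is, $Y_R$ is $R$-locally $X$. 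Each $Y_R$ is vertex-transitive (the building has a vertex-transitive automorphism group), and it is $2$-simply connected because Bruhat-Tits buildings of type $\widetilde A_2$ are contractible, in particular their $2$-skeleta are simply connected; this gives the ``$2$-simply connected vertex transitive graph'' required. Finally, $Y_R$ is not quasi-isometric to $X$: here I would invoke rigidity of Bruhat-Tits buildings (quasi-isometries of $\widetilde A_{d-1}$ buildings are at bounded distance from isometries, by Kleiner-Leeb type results, hence preserve the building up to isomorphism), combined with the injectivity of $K\mapsto X_d(K)$ from Tits' theorem, to conclude that $X_3(\Q_p[p^{1/R'}])$ and $X_3(\F_p((t)))$ are not quasi-isometric since $\Q_p[p^{1/R'}]\not\cong \F_p((t))$ (the former has characteristic $0$). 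Alternatively, and more cheaply, one can avoid quasi-isometric rigidity of buildings: since $X$ is not LG-rigid by Theorem~\ref{thm:main}, there already exist graphs $R$-locally $X$ that are not covered by $X$; one then only needs the stronger non-quasi-isometry, which for the specific family $X_3(\Q_p[p^{1/R'}])$ follows from Proposition~\ref{cocompact} in the form that these graphs are eventually not Cayley graphs — but this does not immediately rule out quasi-isometry, so some form of building rigidity does seem needed.

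Putting it together: $X$ is LG-rigid would mean some fixed radius $R_0$ suffices to force a covering; but for $R = R_0$ the graph $Y_{R_0}$ is $R_0$-locally $X$ and not quasi-isometric to $X$, hence certainly not covered by $X$ (a covering is in particular a quasi-isometry when the deck group acts cocompactly, which it does here by vertex-transitivity), a contradiction. This yields the ``in particular'' clause. The main obstacle I anticipate is the last step, establishing that $Y_R$ is not quasi-isometric to $X$: the clean way is via quasi-isometric rigidity of Euclidean buildings, which is a substantial external input; one should check that the cited rigidity results apply to $\widetilde A_2$ buildings over arbitrary (possibly non-discrete-valued, or here discretely valued) local fields and are strong enough to recover the isomorphism type of the building, and hence of the field by Tits' theorem. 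The other steps — finite presentation of the \cite{CMSZ} group, vertex-transitivity and $2$-simple-connectivity of the buildings, and the local convergence $Y_R \to X$ — are routine given the results already assembled in the introduction.
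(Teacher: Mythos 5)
Your construction is exactly the one the paper intends (the paper only sketches it in the paragraph preceding the statement): take $X=X_3(\F_p((t)))$, which is a Cayley graph of a finitely presented group by \cite{CMSZ}; take $Y_R=X_3(\Q_p[p^{1/R}])$, which is $R$-locally $X$ by Lemma \ref{lem:krasner} and Corollary \ref{cor:Tits} (you do not even need an auxiliary $R'$), is vertex-transitive, and is $2$-simply connected because the $\widetilde A_2$ building is the (contractible) flag complex of its $1$-skeleton; and rule out quasi-isometry by Kleiner--Leeb rigidity of Euclidean buildings \cite{KL} combined with the injectivity of $K\mapsto X_3(K)$ via the building at infinity and the fundamental theorem of projective geometry, since $\Q_p[p^{1/R}]$ has characteristic $0$ while $\F_p((t))$ has characteristic $p$. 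This is the same route as the paper's, and the external input \cite{KL} is indeed the one the paper has in mind.

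One justification in your final step is incorrect, although the conclusion is easily repaired. A covering map is not a quasi-isometry merely because the deck group acts cocompactly: the universal cover of any finite graph is a tree and the covering is essentially never a quasi-isometry, and a priori a covering $X\to Y_{R_0}$ could be infinite-to-one. To get ``$Y_{R_0}$ is not covered by $X$'' you should either (a) simply quote the first half of Theorem \ref{thm:main} (positive characteristic, $d\geq 3$) for the clause ``$X$ is not LG-rigid'', as your own cheaper aside suggests, or (b) use the $2$-simple connectivity of $Y_R$ that you established: if $p\colon X\to Y_R$ were a covering, then since $Y_R$ is $R$-locally the vertex-transitive graph $X$ (with $R\geq 2$), the ball $B(x,2)$ in $X$ surjects under $p$ onto $B(p(x),2)$ in $Y_R$, and these are finite of the same cardinality, so $p$ is an isomorphism on $2$-balls; hence triangles of $Y_R$ lift closed at every basepoint, and simple connectivity of the triangle-filled complex forces $p$ to be an isomorphism, contradicting that $Y_R$ is not isomorphic (indeed not quasi-isometric) to $X$. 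With either repair the argument is complete.
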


\begin{Ack} We thank Sylvain Barr\'e, Laurent Berger, Ga\"etan Chenevier, Gabriel Dospinescu, Fr\'ed\'eric Haglund, Pierre Pansu, Mikael Pichot, Vincent Pilloni and Sandra Rozensztajn for useful discussions.
\end{Ack}
\section{Non-archimedean local fields}\label{section:fields}

\begin{defn} A non-archimedean local field is a (not necessarily commutative) field which is locally compact for a discrete valuation.
\end{defn}

If $K$ is a non-archimedean local field with a discrete valuation $v\colon K \to \Z \cup \{\infty\}$, we will always assume that the image of $v$ is $\Z\cup \{\infty\}$, and we will denote its ring of integers $\mathcal O=\{x \in K, v(x)\geq 0\}$ (or $\mathcal O^K$ if we need to keep track of $K$) and $\mathfrak m=\{x \in K, v(x) \geq 1\}$ the unique prime ideal in $\mathcal O$. Denote by $\pi$ a uniformizer of $K$, that is a generator of the $\mathcal O$-module $\mathfrak{m}$. For an integer $R \geq 1$ denote by $\mathcal O_R$ the ring $\mathcal O/\pi^R \mathcal O$.

A field $K$ with a discrete valuation $v$ is locally compact if and only if it is complete and the residue field $\mathcal O/ \pi \mathcal O$ is finite.

Two non-archimedean local fields are $R$-close if their residue rings $\mathcal O_R$ are isomorphic. 

\begin{thm}\label{thm:distance_nalf} The distance
\[ d(K,K') = \inf\{ e^{-R},\textrm{ $K$ and $K'$ are $R$-close}\}\]
defines a locally compact Hausdorff topology on the isomorphism classes (as topological fields) of non-archimedean local fields. 

For this topology, a set of non-archimedean local fields is relatively compact if and only if the cardinality of the residue field is bounded on this set. 

A field is isolated if and only if it has characteristic $0$.
\end{thm}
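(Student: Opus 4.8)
The plan is to verify the ultrametric axioms by hand, to derive the two compactness assertions from an inverse-limit argument, and to reduce the statement about isolated points to the classical rigidity of mixed-characteristic local fields. First I would record two elementary facts. (a) Being ``$R$-close'' is monotone in $R$: the maximal ideal of the finite local ring $\mathcal{O}_R=\mathcal{O}/\pi^R\mathcal{O}$ is intrinsic (it is the nilradical), so an isomorphism $\mathcal{O}_R\cong\mathcal{O}'_R$ carries maximal ideal to maximal ideal, hence $R'$-th power to $R'$-th power, and induces on the quotients an isomorphism $\mathcal{O}_{R'}\cong\mathcal{O}'_{R'}$ for every $R'\le R$. (b) $|\mathcal{O}_R|=q^R$, where $q=|\mathcal{O}/\mathfrak m|$, since the associated graded of the $\mathfrak m$-adic filtration of $\mathcal{O}_R$ is $(\mathcal{O}/\mathfrak m)[t]/(t^R)$; in particular $\mathcal{O}_R$ is finite. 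By (a), $d(K,K')=e^{-R(K,K')}$ with $R(K,K')\in\{0,1,\dots,\infty\}$ the largest $R$ for which $K,K'$ are $R$-close, and transitivity of the relation ``$\mathcal{O}_R\cong\mathcal{O}'_R$'' gives the ultrametric inequality, while symmetry and $d(K,K)=0$ are clear. It remains to see $d(K,K')=0\Rightarrow K\cong K'$: then each $\mathrm{Iso}(\mathcal{O}_R,\mathcal{O}'_R)$ is nonempty and finite by (b), and by (a) these sets form an inverse system under restriction, so by K\"onig's lemma there is a compatible family of isomorphisms; passing to the limit, and using that $K$ is complete so that $\mathcal{O}=\varprojlim_R\mathcal{O}_R$ with its $\mathfrak m$-adic ($=$ inverse-limit) topology, we obtain an isomorphism of topological rings $\mathcal{O}\cong\mathcal{O}'$, hence of fraction fields $K\cong K'$ as topological fields. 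Thus $d$ is an ultrametric with Hausdorff topology.

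For the compactness statements: the residue cardinality $K\mapsto q_K$ is locally constant (it is constant on each ball of radius $e^{-1}$), hence bounded on any relatively compact set. Conversely fix $N$; I claim $\{K:q_K\le N\}$ is compact. It is totally bounded because a ball of radius $e^{-R}$ meeting it is an isomorphism class of a ring $\mathcal{O}_R$ of cardinality at most $N^R$, and there are only finitely many isomorphism classes of rings of bounded cardinality. And the ambient space is complete: given a Cauchy sequence $(K_n)$, for each $R$ the rings $\mathcal{O}^{K_n}_R$ are eventually equal (up to isomorphism) to some finite local ring $A_R$; by (a) the $A_R$ form a compatible inverse system, a further application of K\"onig's lemma produces a compatible system of uniformizers, and $\mathcal{O}_\infty:=\varprojlim_R A_R$ is then a complete local ring with principal maximal ideal whose associated graded is $A_1[t]$ --- hence a complete discrete valuation ring with finite residue field, the quotients by the powers of its maximal ideal recovering the $A_R$. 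Then $K_\infty:=\mathrm{Frac}(\mathcal{O}_\infty)$ is a non-archimedean local field and $K_n\to K_\infty$. So relative compactness is equivalent to bounded residue cardinality; and since every ball of radius $<1$ has constant residue cardinality, it is relatively compact, so the space is locally compact.

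Finally, isolated points. If $\mathrm{char}(K)=p$, then, finite division rings being fields (Wedderburn), $K$ is a central division algebra over its center, which is a characteristic-$p$ commutative non-archimedean local field $\F_{q_0}((t))$; the classical construction lifts this datum to characteristic $0$, replacing the uniformizer $t$ by a root $\varpi$ of $\varpi^R=p$ --- that is, passing to the totally ramified degree-$R$ extension of $W(\F_{q_0})[1/p]$, together with the division algebra over it of the same Hasse invariant as $K$ --- and produces a characteristic-$0$ field $K'_R$ with $\mathcal{O}^{K'_R}_R\cong\mathcal{O}^K_R$, the key computation being $W(\F_{q_0})[\varpi]/(\varpi^R)\cong\F_{q_0}[t]/(t^R)$ as in the introduction. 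Since $K'_R\not\cong K$ and $K'_R\to K$, the field $K$ is not isolated. Conversely, if $\mathrm{char}(K)=0$, set $e=v_K(p)<\infty$: for $R>e$ the element $p\cdot 1$ is nonzero in $\mathcal{O}^K_R$, hence nonzero in $\mathcal{O}^{K'}_R$ and therefore in $\mathcal{O}^{K'}$ whenever $K'$ is $R$-close to $K$; so such a $K'$ has characteristic $0$ as well, both rings of integers are module-finite $\Z_p$-algebras, and the rigidity of mixed-characteristic local fields (Krasner, Deligne) says that once $R$ is large (in terms of $e$ and the different) the finite ring $\mathcal{O}^K_R$ determines $\mathcal{O}^K$ up to isomorphism among all such algebras --- so $K'\cong K$ and $K$ is isolated.

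The main obstacle is this last step: the Krasner--Deligne rigidity of mixed-characteristic local fields, which moreover must be applied to possibly non-commutative $K$. Granting that classical input, together with the elementary remark that $p\ne 0$ in $\mathcal{O}_R$ already pins down the characteristic, everything else is a formal compactness and inverse-limit argument.
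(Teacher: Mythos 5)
Your ultrametric/Hausdorff argument (monotonicity of $R$-closeness via the intrinsic maximal ideal, plus a K\"onig's-lemma inverse-limit argument to get $d(K,K')=0\Rightarrow K\cong K'$), and your compactness argument (local constancy of the residue cardinality, total boundedness from the finiteness of rings of bounded cardinality, completeness via $\varprojlim_R A_R$) are sound in outline and in fact more self-contained than the paper, which establishes compactness instead by classifying all fields with a given residue field and checking sequential compactness. But the heart of the theorem is the statement that a field is isolated if and only if it has characteristic $0$, and there your proposal has a genuine gap: the direction ``characteristic $0\Rightarrow$ isolated'' is entirely outsourced to a ``Krasner--Deligne rigidity'' black box which, as you yourself note, covers only \emph{commutative} fields, whereas the theorem (and its use in the rest of the paper) concerns not-necessarily-commutative non-archimedean local fields. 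Nothing in your argument shows that a non-commutative $K$ of characteristic $0$ cannot be approximated arbitrarily well by non-isomorphic fields: for that you must control the non-commutative data, not just the center. The paper does this via Hasse's classification (Lemma \ref{lem:ncext}: $K$ is determined by its center $L$, the residual degree $d$ and the Hasse invariant $r$), the finiteness of the number of fields of each type $(p,f,e,d,r)$ (Krasner's finiteness theorem for the commutative centers), and the explicit computation of the residue rings of the division algebras (Lemma \ref{lem:BallLemma}); isolation of characteristic-$0$ fields then falls out of the identification of all accumulation points as positive-characteristic fields, rather than from a rigidity statement quoted in the strong form ``$\mathcal O_R$ determines $\mathcal O$ among all such algebras''. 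Even in the commutative case, that strong form is not what the cited references give verbatim, which is presumably why the paper reproves the point from the weaker finiteness input.

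A secondary, smaller compression occurs in the ``characteristic $p\Rightarrow$ not isolated'' direction: your lift replaces $t$ by $\varpi$ with $\varpi^R=p$ and takes the division algebra over the lifted center with the same Hasse invariant, but the isomorphism $\mathcal O^{K'_R}_R\cong\mathcal O^K_R$ you assert is a statement about the \emph{non-commutative} residue rings, while the only computation you supply is the commutative one $W(\F_{q_0})[\varpi]/(\varpi^R)\cong\F_{q_0}[t]/(t^R)$ (the paper's Lemma \ref{lem:krasner}); the missing step is exactly the presentation of $\mathcal O^K_{ed}$ by generators and twisted relations over $\mathcal O^{K_1}_e$ carried out in Lemma \ref{lem:BallLemma}. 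This part is fixable along the lines you indicate, but together with the previous point it means the non-commutative half of the ``isolated $\Leftrightarrow$ characteristic $0$'' statement is not actually proved in your proposal.
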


The topology is Hausdorff because a non-archimedean local field is determined as a topological field by the sequence of its residual rings
. For the rest of the proof of Theorem \ref{thm:distance_nalf} we will need an explicit description of all non-archimedean local fields. We refer to \cite[Chapter 1]{W74} for the statements for which we do not provide a precise reference. If $p$ is a prime number and $q=p^f$ for some $f\geq 1$, we denote by $\Q_q$ the totally unramified extension of degree $f$ of $\Q_p$. 

Let $K$ be a \emph{commutative} non-archimedean local field with valuation $v$. Its residual field is isomorphic to $\F_{p^f}$ for a prime number $p$ called the \emph{residual characteristic of $K$} and an integer $f \geq 1$ called the absolute residual degree of $K$. The value $e=v(p) \in \N \cup \infty$ is called the absolute ramification index. If $K$ has characteristic $0$ (\emph{i.e.} $e<\infty$), then $K$ is isomorphic to a totally ramified commutative extension of $\Q_{p^f}$ of degree $e$ (hence $K$ is isomorphic to $\Q_{p^f}[X]/(P)$, where $P$ is an Eisenstein polynomial of degree $e$ with coefficients in $\OO^{\Q_{q}}$). We will use that, for every prime number $p$ and every integers $f,e \geq 1$, there are finitely many \cite[\S 3.1.6]{Robert}, and at least one (for example $\Q_{p^f}[p^{1/e}]$), commutative non-archimedean local fields with residual characteristic $p$, absolute ramification index $e$ and absolute residual degree $f$. If $K$ has positive characteristic (\emph{i.e.}\ $e=\infty$), $K$ is isomorphic to $\F_{p^f}((t))$. 
\begin{lem}\label{lem:krasner} If $K$ is a local field of residual field $\F_q$ and absolute ramification index $e$, then $K$ and $\F_q((t))$ are $e$-close.
\end{lem}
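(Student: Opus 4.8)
The plan is to reduce everything to the explicit structure theory of commutative non-archimedean local fields recalled above (the hypothesis only involves the ``absolute ramification index'', which was defined only for commutative fields, so $K$ is implicitly commutative here). If $e=\infty$ there is nothing to prove: then $K$ has positive characteristic, hence $K\cong \F_q((t))$ by the structure theorem, so $K$ and $\F_q((t))$ are isomorphic, a fortiori $\infty$-close. So I may assume $e<\infty$, i.e.\ $K$ has characteristic $0$; it then suffices to exhibit an isomorphism of rings
\[ \mathcal O^K/\pi^e\mathcal O^K \;\cong\; \F_q[t]/(t^e),\]
since the right-hand side is exactly $\mathcal O^{\F_q((t))}_e$.

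The first --- and essentially only --- point is the identity $v_K(p)=e$, which is the very definition of the absolute ramification index; equivalently $p\,\mathcal O^K=\mathfrak m^e=\pi^e\mathcal O^K$. Hence $\mathcal O^K/\pi^e\mathcal O^K=\mathcal O^K/p\,\mathcal O^K$, and it remains to compute the latter. Here I would invoke the structure theorem: $K\cong \Q_q[X]/(P)$ for an Eisenstein polynomial $P$ of degree $e$ over $\mathcal O^{\Q_q}$, and correspondingly $\mathcal O^K=\mathcal O^{\Q_q}[\pi]\cong \mathcal O^{\Q_q}[X]/(P(X))$, a free $\mathcal O^{\Q_q}$-module with basis $1,\pi,\dots,\pi^{e-1}$, where $\pi$ denotes the image of $X$ (a uniformizer of $K$). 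This freeness with the power basis is the standard fact about totally ramified (Eisenstein) extensions and is part of the structure theory we are allowed to assume.

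Reducing this presentation modulo $p$ gives $\mathcal O^K/p\,\mathcal O^K\cong \bigl(\mathcal O^{\Q_q}/p\,\mathcal O^{\Q_q}\bigr)[X]/(\overline P(X))=\F_q[X]/(\overline P(X))$, using that $\mathcal O^{\Q_q}/p\,\mathcal O^{\Q_q}=\F_q$ because $\Q_q/\Q_p$ is unramified. Since $P$ is Eisenstein, all of its coefficients except the leading one lie in $p\,\mathcal O^{\Q_q}$, so $\overline P(X)=X^e$. Therefore $\mathcal O^K/\pi^e\mathcal O^K\cong \F_q[X]/(X^e)$, which is the desired isomorphism, proving that $K$ and $\F_q((t))$ are $e$-close.

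I do not expect any genuine obstacle: granting the structure theorem, the argument is a two-line computation, and the only thing to get right is the translation ``mod $\pi^e$'' $=$ ``mod $p$'' coming from $v_K(p)=e$. The single mild point of care is to be sure that $\mathcal O^K$ is really $\mathcal O^{\Q_q}[\pi]$ with the power basis, so that reduction modulo $p$ is clean and the Eisenstein hypothesis can be used verbatim.
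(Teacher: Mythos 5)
Your proof is correct and follows essentially the same route as the paper: write $K=\Q_q[X]/(P)$ with $P$ Eisenstein of degree $e$, use that $\mathcal O^K=\mathcal O^{\Q_q}[X]/(P)$ with the power basis, and conclude $\mathcal O^K_e\simeq \F_q[X]/(X^e)$. The only (immaterial) difference is the order of the reduction: you first note $\pi^e\mathcal O^K=p\,\mathcal O^K$ and reduce mod $p$, whereas the paper reduces mod $X^e$ and then deduces $p=0$ in the quotient.
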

\begin{proof} Note that $K$ is a totally ramified commutative extension of $\Q_q$ of degree $e$, Hence we can write $K = \Q_{q}[X]/(P)$ where $q=p^f$ and $P=X^e+a_{e-1}X^{e-1}+\ldots +a_0$ is an Eisenstein polynomial of degree $e$ with coefficients in $\Z_q=\mathcal O^{\Q_q}$. Recall that $P$ satisfies $a_i\in p\Z_q$ for all $0\leq i\leq k-1$, and $a_0=bp$ where $b$ is invertible in $\Z_q$. Note that $\OO^K_e = \Z_q[X]/(J)$ where $J=(X^e) + (P)$. It follows that in $\OO^K_e$ we have $$p(b+a_1X+\ldots+a_{e-1}t^{e-1})=0,$$ from which we deduce that $p=0$.
On the other hand, modulo $p$, one has $P=X^e$. So finally we deduce that
$$O_e^{K}\simeq \Z_{q}[X]/\left((X^e)+(p)\right)\simeq \left(\Z_{q}[X]/(p)\right)/(X^e)\simeq \F_{q}[X]/(X^e)\simeq \mathcal O_e^{\F_{q}((t))},$$
 and we are done.
\end{proof}

Let us now move to non-commutative fields. If $K$ is a non-archimedean local field with center $L$, the residual field $\kk$ of $K$ is an extension of the residual field $\kl$ of $L$; denote by $d$ the degree of $\kk/\kl$ ($d$ is called the residual degree of $K/L$). The Galois group Gal($\kk/\kl$) is cyclic of order $d$ with generator the Frobenius automorphism. Moreover, if $\pi$ is a uniformizer of $K$, the conjugation $x \in K \mapsto \pi^{-1} x \pi$ belongs to Gal($K/L$). Its image $\alpha$ in Gal($\kk/\kl$) does not depend on the choice of the uniformizer because $\kk$ is commutative, and corresponds to the $r$-th power of the Frobenius for some $r \in \Z/d\Z$. For convenience we shall call $r$ the {\it Hasse invariant} of $K$. The next lemma states that $r$ is a generator of $\Z/d\Z$, and that the triple $(L,d,r)$ with $d \geq 1$ and $r$ a generator of $\Z/d\Z$ determines a unique non-commutative field $K$. Observe that the case where $K$ is commutative corresponds to $d=1$, in which case $r=0$ (which is a generator of the trivial group $\Z/\Z$).

\begin{lem}(\cite{Hasse},\cite[Chapter 1 (p20--22), Chapter XII]{W74})\label{lem:ncext}
Let $K,L,d,r$ as above. Then $r$ is a generator of $\Z/d\Z$. Conversely, for every commutative non-archimedean local field $L$, every integer $d \geq 1$ and every generator $r$ of $\Z/d\Z$, there is a unique non-archimedean local field $K$ with center $L$, residual degree $d$ over $L$ and Hasse invariant $r$. It has degree $d^2$ over $L$ and can be described as follows. It contains a maximal commutative extension $K_1$ of $L$ of degree $d$ which is unramified. Moreover, $L$ has a uniformizer $\pi$ and $K$ has a uniformizer $x$ such that $x^d=\pi$, $(1,x,\ldots,x^{d-1})$ forms a basis of $K$ as a $K_1$-vector space, and for all $a\in K_1$, $x^{-1} a x=f^r(a)$, where $f$ is the unique automorphism of $K_1$ inducing the Frobenius automorphism of Gal($\kk_1/\kl$).
\end{lem}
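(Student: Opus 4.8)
The plan is to recognise $K$ as a cyclic algebra over $L$ and to read off every assertion from the structure of such algebras. A non-commutative non-archimedean local field is the same thing as a finite-dimensional division algebra over its centre $L$, which is itself a commutative non-archimedean local field, and the valuation $v$ of $L$ extends uniquely to a discrete valuation $w$ on $K$. I would begin by quoting, from the classical analysis of division algebras over local fields (\cite[Chapter 1]{W74}), the facts that I will not reprove: $e(K/L)=f(K/L)=:d$, hence $[K:L]=d^2$; the residue field $\kk$ of $K$ is the unramified extension of degree $d$ of the residue field $\kl$ of $L$, cyclic with Frobenius generator; $K$ contains a maximal commutative subfield $K_1$, unramified of degree $d$ over $L$ and lifting $\kk$; and a uniformizer $x$ of $K$ may be chosen with $x^d=\pi$ for a uniformizer $\pi$ of $L$, so that $(1,x,\dots,x^{d-1})$ is a $K_1$-basis of $K$.

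Next I would analyse conjugation by $x$. Being inner, it fixes $L$ pointwise and preserves $w$, $\OO^K$ and $\mathfrak m$; it therefore normalises $K_1$ (which, being generated over $L$ by the $(|\kk|-1)$-st roots of unity of $K$, is stable under every inner automorphism) and induces an automorphism $a\mapsto x^{-1}ax$ of $K_1/L$, necessarily $f^r$ for a unique $r\in\Z/d\Z$, where $f$ is the Frobenius-lifting generator of $\mathrm{Gal}(K_1/L)$. Reducing modulo $\mathfrak m$ identifies this $r$ with the Hasse invariant, and replacing $x$ by $ux$, $u\in(\OO^K)^\times$, replaces $x^{-1}ax$ by $u^{-1}(x^{-1}ax)u$, which has the same image in the \emph{commutative} field $\kk$; so $r$ is independent of $x$. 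From $x^d=\pi$ and $x^{-1}ax=f^r(a)$ one gets $ax^i=x^if^{ir}(a)$, and $K=\bigoplus_{i=0}^{d-1}K_1x^i$ is exactly the cyclic algebra $(K_1/L,f^r,\pi)$. A short centraliser computation then determines its centre: if $\gcd(r,d)=g>1$ then $x^{d/g}$ commutes with $K_1$ and with $x$, hence is central, but $w(x^{d/g})=1/g\notin\Z$ and so it does not lie in $L$; conversely if $\gcd(r,d)=1$ then an element $\sum_i c_ix^i$ centralising $K_1$ has $c_i=0$ for every $i\neq0$, so the centre is $\{c\in K_1:f^r(c)=c\}=L$. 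Since the centre of $K$ is $L$ by hypothesis, $r$ must generate $\Z/d\Z$; together with $[K:L]=d^2$ and the description above, this is the first assertion.

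For the converse, given $L$, $d\geq1$ and a generator $r$ of $\Z/d\Z$, I would take $K_1/L$ unramified of degree $d$ with Frobenius $f$, choose any uniformizer $\pi$ of $L$, and set $K:=(K_1/L,f^r,\pi)=\bigoplus_{i=0}^{d-1}K_1x^i$ with $x^d=\pi$ and $x^{-1}ax=f^r(a)$. That $K$ is a division algebra I would prove directly: extending $v$ integrally to $K_1$, the function $w(\sum_i c_ix^i)=\min_i\bigl(v(c_i)+i/d\bigr)$ attains its minimum at a \emph{unique} index, since the numbers $v(c_i)+i/d$ lie in pairwise distinct cosets of $\Z$; hence in a product of two nonzero elements the leading terms cannot cancel, $w$ is multiplicative, and $K$ has no zero divisors. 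That the centre of $K$ is $L$, that its residual degree over $L$ is $d$, and that its Hasse invariant is $r$, is the computation of the previous paragraph read forwards, together with $\kk=\kk_1$. For uniqueness: any $K$ with centre $L$, residual degree $d$ and Hasse invariant $r$ is isomorphic to $(K_1/L,f^r,\pi')$ for some uniformizer $\pi'=u\pi$, $u\in(\OO^L)^\times$; using that $N_{K_1/L}\colon(\OO^{K_1})^\times\to(\OO^L)^\times$ is surjective (it is surjective on the finite residue fields and lifts by successive approximation), pick $c\in K_1^\times$ with $N_{K_1/L}(c)=u$ and observe $y:=cx$ satisfies $y^{-1}ay=f^r(a)$ and $y^d=\bigl(\prod_{j=0}^{d-1}f^{-jr}(c)\bigr)\pi=N_{K_1/L}(c)\,\pi=\pi'$; thus $K_1$ and $y$ generate a copy of $(K_1/L,f^r,\pi')$ of full dimension $d^2$ inside $(K_1/L,f^r,\pi)$, and the two algebras coincide.

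The main obstacle is not the algebra above but the structural input quoted in the first paragraph, and above all the normalisation $x^d=\pi\in L$: this is classical but not formal, and an honest write-up must establish it. The argument is of the same flavour as the norm computation used for uniqueness — starting from an arbitrary $x_0$ with $w(x_0)=1/d$ and replacing it by $cx_0$, its $d$-th power becomes a product indexed by $\Z/d\Z$, hence a norm, once one knows the residue automorphism induced by $x_0$ has order $d$. The remaining external ingredients are mild: surjectivity of the norm on units of an unramified extension of local fields (where finiteness of the residue field is used), and the fact that a central simple algebra with no zero divisors is a division algebra.
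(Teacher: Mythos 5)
The paper does not prove this lemma at all: it is quoted from the classical literature (Hasse, and Weil's \emph{Basic Number Theory}), so what you are reconstructing is the standard structure theory of division algebras over local fields via cyclic algebras. Most of your sketch follows that standard route correctly: the valuation argument showing that $(K_1/L,f^r,\pi)$ has no zero divisors, the centraliser computation relating $\gcd(r,d)$ to the centre, and the norm trick (surjectivity of $N_{K_1/L}$ on units of an unramified extension) both for the normalisation $x^d=\pi$ and for uniqueness are all sound.

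There is, however, one genuinely false step, and it sits exactly at the point the whole analysis hinges on: your claim that $K_1$ ``being generated over $L$ by the $(|\kk|-1)$-st roots of unity of $K$, is stable under every inner automorphism'', so that conjugation by an arbitrary uniformizer $x$ (or $x_0$) restricts to an automorphism of $K_1/L$. In a non-commutative division ring the solutions of $z^{|\kk|-1}=1$ do not form a single cyclic group: every conjugate $u\zeta u^{-1}$ of a Teichm\"uller element $\zeta\in K_1$ is again such a root of unity, and these conjugates generically lie outside $K_1$ (the subfield they generate over $L$ is not commutative; in fact it is all of $K$ in the quaternion case). Consequently it is simply not true that every uniformizer normalises $K_1$: in the quaternion division algebra over $\Q_p$, with $K_1=\Q_{p^2}$ and $x^2=p$ normalising $K_1$, the normaliser of $K_1$ in $K^\times$ meets the unit group only in $(\OO^{K_1})^\times$, so $ux$ for a generic unit $u$ is a uniformizer that does not preserve $K_1$. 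This breaks the step where you pass from the Hasse invariant $r$ (defined canonically on the residue field $\kk$, which is fine) to an automorphism $f^r$ of $K_1$ induced by a uniformizer, and hence also your later normalisation $x_0^d\in L$ and the identification $K=\bigoplus_i K_1x^i=(K_1/L,f^r,\pi)$. The standard repair is not elementary in your sense: one needs the Skolem--Noether theorem (every $L$-automorphism of $K_1$ is induced by conjugation by some element of $K^\times$) together with the double centraliser fact $C_K(K_1)=K_1$, to produce \emph{some} element of the normaliser inducing $f^r$, and then a valuation argument to replace it by a uniformizer; only for such a carefully chosen $x$ do the rest of your computations go through. With that ingredient restored, your sketch matches the cited classical proof.
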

We will say that a non-archimedean local field $K$ has type $(p,f,e,d,r)$ if its center $L$ has residual characteristic $p$, absolute ramification index $e$ and absolute residual degree $f$, and if the extension $K/L$ has residual degree $d$ and Hasse invariant $r$. From the preceding discussion we conclude that for every prime number $p$, every integers $f,d \geq 1$, every $e \in \N \cup \{\infty\}$ and every $r\in (\Z/d\Z)^*$, the number of fields of type $(p,f,e,d,r)$ is finite and nonzero. As we have seen, there exists a unique field of type $(p,f,\infty,d,r)$. If $q=p^f$, it can be concretely defined as the quotient of the $\F_{q}((t))$-algebra freely generated by $\F_{q^{d}}((t))$ and by an element $x$, by the ideal generated by the following relations: $x^d=t$, and $a x= x f(a),$ for all $a\in \F_{q^{d}}((t))$, where $f$ is the automorphism of $\F_{q^{d}}((t))$ uniquely defined by $f(t)=t$, and $f(z)=z^{q^{r}}$ for all $z\in \F_{q^d}$. 

\begin{lem}\label{lem:BallLemma}
Let $K$ (resp.\ $F$) be respectively a field of type $(p,f,e,d,r)$ (resp.\ the field of type $(p,f,\infty,d,r)$). Then $K$ and $F$ are $ed$-close: i.e.\ the residue rings $\mathcal O^K_{ed}$ and  $\mathcal O^{F}_{ed}$ are isomorphic.
\end{lem}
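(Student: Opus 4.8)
The plan is to reduce, via the structure theorem of Lemma \ref{lem:ncext}, to a question about commutative coefficient rings that is then settled by Lemma \ref{lem:krasner}. If $e=\infty$ there is nothing to prove: $F$ is \emph{the} field of type $(p,f,\infty,d,r)$, and $K$ also has this type. So assume $e<\infty$; then the center $L=Z(K)$ has characteristic $0$ and absolute ramification index $e$, and Lemma \ref{lem:krasner} applies to the fields in sight.

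First I would make the structure of $\mathcal O^K_{ed}$ explicit. By Lemma \ref{lem:ncext}, $\mathcal O^K$ is a free left $\mathcal O^{K_1}$-module with basis $1,x,\dots,x^{d-1}$ (the $x^i$ have pairwise distinct valuations modulo $d$), where $x$ is a uniformizer of $K$, $x^d=\pi$ is a uniformizer of $L$ — hence also of $K_1$, which is unramified over $L$ — and $ax=x\tau(a)$ for $a\in\mathcal O^{K_1}$, with $\tau=f^r$ an automorphism of $K_1$ of order $d$, fixing $L$ pointwise and inducing $z\mapsto z^{q^r}$ on the residue field $\F_{q^d}$. Since $\pi^e=x^{ed}$, we have $\mathcal O^K_{ed}=\mathcal O^K/\pi^e\mathcal O^K$, which is therefore the ring that is free as a left module over $R:=\mathcal O^{K_1}/\pi^e\mathcal O^{K_1}$ on the basis $1,x,\dots,x^{d-1}$, with multiplication determined by $x^d=\bar\pi$ and $ax=x\bar\tau(a)$ ($a\in R$), where $\bar\pi$, $\bar\tau$ are the images of $\pi$, $\tau$; here $\bar\pi$ is central (as $\pi\in L$) and $x^d$ is central (as $\bar\tau^{\,d}=\mathrm{id}$). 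In the same way, using that $F$ contains the unramified object $\F_{q^d}((t))$ — with its automorphism $\tau'$ fixing $t$ and inducing $z\mapsto z^{q^r}$, and a uniformizer $x$ with $x^d=t$ — one finds that $\mathcal O^F_{ed}$ is free as a left module over $R':=\F_{q^d}[[t]]/(t^e)=\F_{q^d}[t]/(t^e)$ on $1,x,\dots,x^{d-1}$, with multiplication determined by $x^d=t$ and $ax=x\bar\tau'(a)$, $\bar\tau'$ the automorphism induced by $\tau'$.

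It therefore suffices to produce a ring isomorphism $\theta\colon R\to R'$ with $\theta(\bar\pi)=t$ and $\theta\circ\bar\tau=\bar\tau'\circ\theta$: then $\sum_i a_ix^i\mapsto\sum_i\theta(a_i)x^i$ is a ring isomorphism $\mathcal O^K_{ed}\to\mathcal O^F_{ed}$ (multiplicativity being immediate from these two identities together with the centrality of $\bar\pi$ and $t$; bijectivity is clear). Since $K_1/L$ is unramified, $K_1$ too has absolute ramification index $e$, so Lemma \ref{lem:krasner} applied to $K_1$ gives a ring isomorphism $R=\mathcal O^{K_1}_e\cong\F_{q^d}[t]/(t^e)=R'$; writing moreover $K_1=\Q_{q^d}[X]/(P)$ with $P$ the (Eisenstein) minimal polynomial of $\pi$ over $\Z_{q^d}$ — admissible since $K_1/\Q_{q^d}$ is totally ramified of degree $e$ — the computation proving Lemma \ref{lem:krasner} produces such a $\theta_0$ with $\theta_0(\bar\pi)=t$.

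The last point, which I expect to be the crux, is to make $\theta_0$ intertwine $\bar\tau$ and $\bar\tau'$. Here I would use that $R$ and $R'$ are finite local $\F_p$-algebras with residue field $\F_{q^d}$ (as $v_{K_1}(p)=e$, the ideals $(\pi^e)$ and $(p)$ of $\mathcal O^{K_1}$ coincide): in such a ring the set $\{y:y^{q^d}=y\}$ is exactly the image of the Teichm\"uller section, a copy of $\F_{q^d}$ canonically attached to the ring and hence respected by every ring isomorphism. Now $\bar\tau$ preserves this $\F_{q^d}\subset R$ and acts on it by $z\mapsto z^{q^r}$ (Frobenius powers permute Teichm\"uller representatives), and $\bar\tau$ fixes $\bar\pi$; likewise $\bar\tau'$ acts by $z\mapsto z^{q^r}$ on $\F_{q^d}\subset R'$ and fixes $t$, and it is the \emph{only} automorphism of $R'=\F_{q^d}[t]/(t^e)$ doing so, because $\F_{q^d}$ and $t$ generate $R'$ as a ring. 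Since $\theta_0$ carries $\F_{q^d}\subset R$ onto $\F_{q^d}\subset R'$ via some $\rho\in\mathrm{Gal}(\F_{q^d}/\F_p)$, we may replace $\theta_0$ by $\mu\circ\theta_0$, where $\mu$ is the automorphism of $R'$ equal to $\rho^{-1}$ on $\F_{q^d}$ and fixing $t$, and so assume $\theta_0$ is the identity on these copies of $\F_{q^d}$, still with $\theta_0(\bar\pi)=t$. Then $\theta_0\circ\bar\tau\circ\theta_0^{-1}$ acts by $z\mapsto z^{q^r}$ on $\F_{q^d}\subset R'$ and fixes $t$, hence equals $\bar\tau'$, so $\theta:=\theta_0$ works. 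Apart from this compatibility step, everything reduces mechanically to Lemmas \ref{lem:ncext} and \ref{lem:krasner} once one observes that $\mathcal O^K_{ed}=\mathcal O^K/\pi^e\mathcal O^K$ has this ``twisted polynomial'' structure over $\mathcal O^{K_1}_e$.
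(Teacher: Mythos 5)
Your proof is correct and follows essentially the same route as the paper: decompose $\mathcal O^K_{ed}$ as a twisted truncated polynomial ring over $\mathcal O^{K_1}_e$ via Lemma \ref{lem:ncext}, then identify $\mathcal O^{K_1}_e\cong \F_{q^d}[t]/(t^e)$ via Lemma \ref{lem:krasner}. Your explicit bookkeeping with Teichm\"uller representatives and the uniformizer (arranging $\theta(\bar\pi)=t$ and the intertwining of the Frobenius lifts) just spells out the compatibility step that the paper compresses into its appeal to the uniqueness of the automorphism $\widetilde f$.
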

\begin{proof}
With the notation of Lemma \ref{lem:ncext}, $K$ is isomorphic to the quotient $\widetilde K$ of the ring freely generated by $K_1$ and $x$, by the ideal generated by the relations $x^d=\pi$, and $a x= x f^r(a),$ for all $a\in K_1$, where $f$ induces the Frobenius on the residue field. Indeed, one clearly has a morphism of $L$-algebras from $\widetilde{K}$ to $K$ that is the identity on $K_1$ and on $x$. The fact this is an isomorphism follows by comparing the dimensions over $L$. We deduce that the residue ring $\OO^K_{ed}=\OO^K/(x^{ed})=\mathcal O^K/(\pi^{e})$ is isomorphic to the quotient of the ring freely generated by $\mathcal O^{K_1}/(\pi^{e})$ and $x$, by the ideal generated by the relations $x^d=\pi$, and $a x= x \widetilde f^r(a)$, where $\widetilde f$ is the unique (by Hensel's Lemma) automorphism of the ring $\mathcal O^{K_1}/(\pi^{e})$ which induces the Frobenius on the residue field. The same description of course applies to $\mathcal O^{F}_{ed}$, and we conclude by Lemma \ref{lem:krasner} because $K_1$ has absolute residual degree $fd$ and absolute ramification index $e$.
\end{proof}

We can now prove Theorem \ref{thm:distance_nalf}. To do so we show that the balls of radius $\frac 1 2$ are compact (here $\frac 1 2$ could be any number in $[\frac 1 e, 1)$), and that their only accumulation points are the fields of positive characteristic. Since two fields are at distance less than $\frac 1 2$ if and only if they have the same residue field, it amounts to investigating, for every finite field $\F_q$, the set of fields having $\F_q$ as residue field. This sets contains exactly the fields of type $(p,f,e,d,r)$ for $q=p^{fd}$ and $k \in \N \cup \{\infty\}$. This determines $p$ and forces $f,d$ to take only finitely many values. Therefore (since there are finitely many fields of each type) a sequence of such fields either has a stationnary subsequence, or a subsequence of type $(p,f,e_n,d,r)$ for a sequence $e_n \to \infty$, which converges to the field of type $(p,f,\infty,d,r)$ by Lemma \ref{lem:BallLemma}. This shows that the set of fields with residue field $\F_q$ is compact, and that the fields with characteristic $0$ are isolated. Conversely, every nonarchimedean local field $K$ of characteristic $p>0$ is the field of type $(p,f,\infty,d,r)$ for some $f,d,r$. As we discussed there is a sequence of fields of type $(p,f,n,d,r)$, and it converges as $n \to \infty$ to $K$ by Lemma \ref{lem:BallLemma}.

\section{Buildings}

\subsection{Graphs} In this paper ``a graph" means a connected, locally finite, simplicial graph withouth multiple edges and loops. It is called vertex-transitive if its isometry group acts transitively on the set of vertices. A graph $Y$ is $R$-locally $X$ if every ball of radius $R$ around a vertex in $Y$ is isometric to a ball of radius $R$ around a vertex in $X$. This defines a locally compact Hausdorff topology on the isomorphism classes of transitive graphs, for example for the distance 
\[\inf\{ e^{-R},\textrm{ $Y$ is $R$-locally $X$}\}.\] A set of vertex-transitive graphs if relatively compact if and only if the degree is bounded on this set.

\subsection{Classical buildings of type $\widetilde A_{d-1}$} 
Let $d\geq 2$. Let us recall the description of the building of $GL(d,K)$ (the building $\widetilde A_{d-1}(K,v)$) associated to a non-archimedean local field $K$ with discrete valuation $v$, see \cite[Chapter 9]{Ronan} for details.

An $\mathcal O$-lattice in $K^d$ is a finitely generated $\mathcal O$-submodule which generates $K^d$ as a $K$-vector space. Such a module is free of rank $d$, \emph{i.e.} of the form $\mathcal O v_1 + \ldots + \mathcal O v_d$ for a basis $(v_1,\ldots,v_d)$ of $K^d$. By the invariance property $a \mathcal O = \mathcal O a = \pi^k \mathcal O$ for any $a \in K^*$ and $k \in \Z$ with $v(a) = k$, we see that if $L$ is an $\mathcal O$-lattice and $a \in K^*$, $aL$ is also a lattice, so that it makes sense to talk about lattices modulo homothety.

The building $\widetilde A_{d-1}(K,v)$ is a simplicial complex of dimension $d-1$. Its $1$-squeleton, that we denote by $X_d(F)$ (or $X$ for short if there is no ambiguity) is described as follows. The vertices of $X$ are the $\mathcal O$-lattices in $K^d$ modulo homothety. There is an edge between two different vertices $x$ and $y$ if there are representatives $L_1$ and $L_2$ of $x$ and $y$ such that $\pi L_1 \subset L_2 \subset L_1$. This is the vertex transitive graph $X_d(F)$ we are interested in. 

\subsection{Continuity of $K \mapsto X_d(K)$} 
A lattice modulo homothety $x$ has a unique representative, denoted by $L(x)$, contained in $\mathcal O^d$ but not in $\mathfrak{m}^d$. There is an edge between two different vertices $x$ and $y$ if and only if $\pi L(x) \subset L(y) \subset L(x)$ or $\pi L(y)  \subset L(x) \subset L(y)$.

The following Lemma expresses that the ball of radius $R$ around $\mathcal O^d$ in $X$ is entirely described in terms of the ring $\mathcal O_R$.

\begin{lem}\label{lem:building_from_residue_ring} A lattice modulo homothety $x$ belongs to the ball of radius $R$ around $o$ if and only if $\pi^{R} \mathcal O^d \subset L(x)$.

Moreover the map $\overline L \colon x \mapsto L(x) \mod \pi^R \mathcal O^d$ is a bijection between the ball of radius $R$ around $\mathcal O^3$ in $X$ and the $\mathcal O_R$-submodules of $(\mathcal O_R)^d$ not contained in $(\pi \mathcal O_R)^d$.

Lastly two different vertices $x$ and $y$ in the ball of radius $R$ around $\mathcal O^d$ in $X$ are adjacent if and only if $\pi \overline{L}(x) \subset \overline{L}(y) \subset \overline{L}(x)$ or $\pi \overline{L}(y)  \subset \overline{L}(x) \subset\overline{L}(y)$.
\end{lem}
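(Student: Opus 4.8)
The plan is to prove the three assertions in turn. Only the first (the description of the ball of radius $R$ around $o$, the class of $\mathcal{O}^d$, for which $L(o)=\mathcal{O}^d$) requires an actual argument about distances in $X$; granting it, the other two are formal consequences of the submodule correspondence for the surjection $\mathcal{O}^d\twoheadrightarrow\mathcal{O}^d/\pi^R\mathcal{O}^d=(\mathcal{O}_R)^d$.

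For the ball characterization I would induct on the graph distance $d(o,x)$, using that $L(x)\subseteq\mathcal{O}^d$ always and that, by the paragraph preceding the lemma, two distinct vertices $x,y$ are adjacent exactly when $\pi L(x)\subseteq L(y)\subseteq L(x)$ or $\pi L(y)\subseteq L(x)\subseteq L(y)$. If $d(o,x)=k\geq 1$, choose a neighbour $y$ with $d(o,y)=k-1$; the inductive hypothesis gives $\pi^{k-1}\mathcal{O}^d\subseteq L(y)$, and in the first adjacency case $\pi^{k-1}\mathcal{O}^d\subseteq L(y)\subseteq L(x)$ while in the second $\pi^k\mathcal{O}^d=\pi\cdot\pi^{k-1}\mathcal{O}^d\subseteq\pi L(y)\subseteq L(x)$; either way $\pi^k\mathcal{O}^d\subseteq L(x)$. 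Conversely, assuming $\pi^R\mathcal{O}^d\subseteq L(x)$, I would exhibit a path of length $\leq R$ from $o$ to $x$ by interpolating: set $M_j=L(x)+\pi^j\mathcal{O}^d$ for $0\leq j\leq R$, so that each $M_j$ is a lattice contained in $\mathcal{O}^d$, $M_0=\mathcal{O}^d$, $M_R=L(x)$, and $\pi M_j\subseteq M_{j+1}\subseteq M_j$; the sequence $[M_0],\dots,[M_R]$, after deleting repetitions, is a path of length at most $R$ from $o$ to $x$, so $d(o,x)\leq R$.

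For the bijection, the first assertion gives $\pi^R\mathcal{O}^d\subseteq L(x)\subseteq\mathcal{O}^d$, so $\overline L(x)=L(x)/\pi^R\mathcal{O}^d$ is a well-defined $\mathcal{O}_R$-submodule of $(\mathcal{O}_R)^d$, and it is not contained in $(\pi\mathcal{O}_R)^d$ because $L(x)\not\subseteq\mathfrak{m}^d$ while $\pi^R\mathcal{O}^d\subseteq\mathfrak{m}^d$. Injectivity is immediate, since two submodules of $\mathcal{O}^d$ both containing $\pi^R\mathcal{O}^d$ and having the same image are equal. For surjectivity I would take, given such an $N$, its full preimage $L\subseteq\mathcal{O}^d$ under $\mathcal{O}^d\to(\mathcal{O}_R)^d$; it contains $\pi^R\mathcal{O}^d$, hence spans $K^d$ and is finitely generated, so it is a lattice; it is contained in $\mathcal{O}^d$ and not in $\mathfrak{m}^d$ (lift an element of $N\setminus(\pi\mathcal{O}_R)^d$), so $L=L(x)$ for $x=[L]$, which lies in the ball of radius $R$ by the first part and satisfies $\overline L(x)=N$.

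Finally, for adjacency, the point is that reduction modulo $\pi^R\mathcal{O}^d$ sends $\pi P$ to $\pi\,\overline P$ for any submodule $P$, and that a submodule containing $\pi^R\mathcal{O}^d$ equals the full preimage of its image; hence for $x,y$ in the ball the chain $\pi L(x)\subseteq L(y)\subseteq L(x)$ holds if and only if $\pi\overline L(x)\subseteq\overline L(y)\subseteq\overline L(x)$ does (reduce modulo $\pi^R\mathcal{O}^d$ one way, take preimages the other, using $\pi^R\mathcal{O}^d\subseteq L(x)\cap L(y)$), and symmetrically with $x$ and $y$ exchanged; combining this with the adjacency criterion recalled above gives the statement. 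I do not expect a real obstacle here: the only step needing care is the distance computation in the first assertion, namely checking that the interpolating lattices $M_j=L(x)+\pi^j\mathcal{O}^d$ genuinely realize a path of length at most $R$; everything else is the routine correspondence of submodules under $\mathcal{O}^d\twoheadrightarrow(\mathcal{O}_R)^d$.
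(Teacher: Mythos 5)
Your proof is correct, but for the one substantive point (the converse of the ball characterization) you take a genuinely different route from the paper. The paper invokes the invariant factor decomposition over the (possibly noncommutative) ring $\mathcal O$ (citing Teichm\"uller) to get a basis $v_1,\dots,v_d$ of $\mathcal O^d$ with $\pi^{n_1}v_1,\dots,\pi^{n_d}v_d$ a basis of $L(x)$, and then walks along the vertices $\bigl[\oplus_i \mathcal O\pi^{\min(k,n_i)}v_i\bigr]$; this has the side benefit of identifying the distance exactly (it equals the largest elementary divisor $n_d$), which is the natural ``diagonal'' picture of the building. Your interpolation $M_j=L(x)+\pi^j\mathcal O^d$ achieves the same bound $d(o,x)\le R$ without any structure theorem, is self-contained, and works verbatim in the noncommutative setting, at the cost of not exhibiting the exact distance. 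Two small points worth making explicit if you write it up: each $M_j$ is finitely generated because $M_j/\pi^R\mathcal O^d$ sits inside the finite module $(\mathcal O_R)^d$ (the same remark justifies your quick ``hence finitely generated'' in the surjectivity step), and consecutive classes $[M_j],[M_{j+1}]$ are either equal or adjacent -- the degenerate case $M_{j+1}=\pi M_j$ with distinct lattices in the same class is excluded because $L(x)\not\subseteq\mathfrak m^d$ (though for a mere distance bound a walk already suffices). Your forward induction and your treatment of the second and third assertions match what the paper dismisses as immediate/easy, just written out in detail.
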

\begin{proof}
It is immediate that $\pi^{R} \mathcal O^d \subset L(x)$ if $d(x,o) \leq R$. The converse follows by applying, for any lattice $L \subset \mathcal O$, the invariant factor decomposition over $\mathcal O$-modules (\cite{T37}) to $\mathcal O^d/L$, which provides a basis $v_1,\dots,v_d$ for the $\mathcal O$-module $\mathcal O^d$ and integers $n_1\leq \dots \leq n_d$ such that $\pi^{n_1} v_1,\dots,\pi^{n_d} v_d$ is a basis for $L$. For the lattice $L(x)$, we have $n_1=0$, and if $\pi^R \mathcal O^d \subset L(x)$, we have $n_d \leq R$. If $x_k$ is the equivalence class of the lattice $\oplus_{1 \leq i \leq d} \mathcal O \pi^{\min(k,n_i)} v_i$
then $x_k$ and $x_{k+1}$ are adjacent in $X_d$, $x_0=o$ and $x_{R}=x$, which shows that $d(x,y) \leq R$.

Since $L \mapsto L \mod \pi^R \mathcal O^d$ is a bijection between the lattices $L$ such that $\pi^R \mathcal O^d \subset L \subset \mathcal O^d$ and the $\mathcal O_R$-submodules of $\mathcal O_R^d$, the second statement is immediate from the first.

The last statement is easy.
\end{proof}
We immediately deduce that if two local fields $K,K'$ are $R$-close, then the graphs $X_d(K)$ is $R$-locally $X_d(K')$.
\begin{cor}\label{cor:Tits} The ball of radius $R$ in $X_d(K)$ does only depend (up to isometry) on the ring $\mathcal O_R$. 
\end{cor}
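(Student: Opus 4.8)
The plan is to deduce Corollary~\ref{cor:Tits} directly from Lemma~\ref{lem:building_from_residue_ring}, which already packages all the necessary combinatorial data in terms of $\mathcal O_R$ alone. The key observation is that the lemma gives an explicit, ring-theoretic model for the ball $B(o,R)$ in $X_d(K)$: its vertex set is identified, via $\overline L$, with the set of $\mathcal O_R$-submodules of $(\mathcal O_R)^d$ not contained in $(\pi\mathcal O_R)^d$, and its edge relation is expressed purely through the inclusions $\pi\overline L(x)\subset\overline L(y)\subset\overline L(x)$ (or the symmetric statement), which again only involve the $\mathcal O_R$-module structure. So first I would spell out that this data — vertex set, adjacency — depends functorially on $\mathcal O_R$.

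Next I would make the ``only depends on $\mathcal O_R$'' statement precise: if $K, K'$ are two non-archimedean local fields with an isomorphism $\varphi\colon \mathcal O_R^K \to \mathcal O_R^{K'}$ of rings, then $\varphi$ induces a bijection between $\mathcal O_R^K$-submodules of $(\mathcal O_R^K)^d$ and $\mathcal O_R^{K'}$-submodules of $(\mathcal O_R^{K'})^d$ (applying $\varphi$ coordinatewise on $(\mathcal O_R)^d$), and this bijection respects the ``contained in $(\pi\mathcal O_R)^d$'' condition — because $\varphi$ maps the unique maximal ideal to the unique maximal ideal, hence $\varphi(\pi\mathcal O_R^K) = \pi'\mathcal O_R^{K'}$ — and respects the inclusion/adjacency relations. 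Composing with the bijections $\overline L$ on both sides gives a graph isomorphism $B(o,R)\subset X_d(K) \to B(o',R)\subset X_d(K')$. Since $X_d(K)$ is vertex-transitive (the group $GL(d,K)$, or rather $PGL(d,K)$, acts transitively on lattices modulo homothety, as is standard), every ball of radius $R$ in $X_d(K)$ is isometric to $B(o,R)$, so the isometry type of the radius-$R$ ball in $X_d(K)$ is determined by $\mathcal O_R$.

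I do not expect any serious obstacle here: the corollary is essentially a restatement of Lemma~\ref{lem:building_from_residue_ring} once one notes that a ring isomorphism of $\mathcal O_R$ automatically carries the distinguished ideal $\pi\mathcal O_R$ to the corresponding one. The only point requiring a word of care is vertex-transitivity, so that ``the ball of radius $R$'' is well-defined up to isometry independently of the center; this is already implicit in the setup (the graph $X_d(K)$ was introduced as vertex-transitive), so I would just invoke it. Thus the proof reduces to the following two lines.

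\begin{proof}
By vertex-transitivity of $X_d(K)$, every ball of radius $R$ in $X_d(K)$ is isometric to the ball $B(o,R)$ around $o = \mathcal O^d$. By Lemma~\ref{lem:building_from_residue_ring}, the map $\overline L$ identifies the vertex set of $B(o,R)$ with the set of $\mathcal O_R$-submodules of $(\mathcal O_R)^d$ not contained in $(\pi\mathcal O_R)^d$, and two distinct such vertices $x,y$ are adjacent if and only if $\pi\overline L(x)\subset\overline L(y)\subset\overline L(x)$ or $\pi\overline L(y)\subset\overline L(x)\subset\overline L(y)$. Both the vertex set and the adjacency relation are thus described entirely in terms of the ring $\mathcal O_R$ and its maximal ideal $\pi\mathcal O_R$. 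If $K'$ is another non-archimedean local field and $\varphi\colon \mathcal O_R \to \mathcal O^{K'}_R$ is a ring isomorphism, then $\varphi$ sends the unique maximal ideal $\pi\mathcal O_R$ onto the unique maximal ideal $\pi'\mathcal O^{K'}_R$, so applying $\varphi$ coordinatewise on $(\mathcal O_R)^d$ yields a bijection from the $\mathcal O_R$-submodules of $(\mathcal O_R)^d$ not contained in $(\pi\mathcal O_R)^d$ onto the corresponding set for $K'$, preserving the inclusions above and hence the adjacency relation. Composing with $\overline L$ and its analogue for $K'$ gives an isometry between the ball of radius $R$ in $X_d(K)$ and the ball of radius $R$ in $X_d(K')$.
\end{proof}
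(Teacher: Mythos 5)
Your proof is correct and follows essentially the same route as the paper: the paper also deduces the corollary from Lemma \ref{lem:building_from_residue_ring}, noting that the ball depends only on the pair $\pi\mathcal O_R \subset \mathcal O_R$ and that $\pi\mathcal O_R$ is determined by $\mathcal O_R$ as its unique maximal ideal. Your version merely spells out the induced isomorphism of balls and the (correct) appeal to vertex-transitivity in more detail.
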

\begin{proof} By Lemma \ref{lem:building_from_residue_ring} the ball only depends on the pair $\pi \mathcal O_R \subset \mathcal O_R$. But $\pi \mathcal O_R$ is determined by  $\mathcal O_R$ as its unique maximal ideal.
\end{proof}

\subsection{$\{X_d(K)\}$ is open} We start by recalling some material from \cite{Tits}.

For an integer $m \geq 2$, a generalized $m$-gon is a connected bipartite graph of diameter $m$ and girth $2m$, in which every vertex has degree at least $2$.

A Coxeter diagram over $I$ is a function $M \colon I \times I \to \N \cup \{\infty\}$ such that for all $i,j \in I$, $M(i,i)=1$ and $M(i,j)=M(j,i) \geq 2$ if $i \neq j$. A symmetry of a Coxeter diagram $M$ is a permutation $\sigma$ of $I$ satisfying $M(i,j) = M(\sigma(i),\sigma(j))$ for all $i,j \in I$.

If $I$ is a set, a geometry over $I$ is a pair $(X,\tau)$ where $X$ is a graph and $\tau \colon X \to I$ is a coloring of the vertices of $X$ by labels in $I$ satisfying  that every pair of adjacent vertices have a different label. In a geometry, a complete subgraph is called a flag, and its type is the subset of $I$ defined as its image by $\tau$. The residue of a flag $Z$ of type $J \subset I$ is the geometry over $I \setminus J$ given by $(Y,\tau\left|_{Y}\right.)$ where $Y$ is the set of vertices in $X \setminus Z$ adjacent to $Z$, with the same edges as in $X$. By convention the residue of the empty flag is $(X,\tau)$.

If $M$ is a Coxeter diagram over a set $I$, a geometry of type $M$ is a geometry $(X,\tau)$ over $I$ where for any subset $J \subset I$, the residue of any flag of type $J$ is (1) nonempty if $|I \setminus J| \geq 1$, (2) connected if $|I \setminus J| \geq 2$, (3) a generalized $M(i,j)$-gon if $J= I \setminus \{i,j\}$ for some $i\neq j \in I$. We say that a graph admits a geometry of type $M$ if there exists $\tau \colon X \to I$ such that $(X,\tau)$ is a geometry of type $M$.

The example important for us is the Coxeter diagram $\widetilde A_{d-1}$ over $\Z/d\Z$ given by where $\widetilde A_{d-1}(i,i)=1$, $\widetilde A_{d-1}(i,j)=3$ if $i-j \in \{-1,1\}$ and $\widetilde A_{d-1}(i,j)=2$ otherwise. Then $X_d(K)$ admits a geometry of type $\widetilde A_{d-1}$ (see \cite{Ronan}). It is characterized by the following properties. The origin $o$ is labelled by $\tau(o)=0$, and if $x$ and $y$ are two adjacent vertices with representatives $L_1$ and $L_2$ such that $\pi L_1 \subset L_2 \subset L_1$, then $L_1/\pi L_1$ is a vector space of dimension $d$ over the finite field $\mathcal O/\mathfrak m$ and the dimension (modulo $d$) of the image of $L_2$ inside it is equal to $\tau(y) - \tau(x)$.

A particular case of a theorem of Tits \cite[Theorem 1.3]{Tits} characterizes the buildings of type $\widetilde A_{d-1}$ as the simply connected geometries of type $\widetilde A_{d-1}$. This motivates the following result, which shows that for a large scale simply connected graph $Y$, admitting a geometry of type $M$ is a local property.
\begin{prop}\label{prop:geometry_local} Let $M$ be a Coxeter diagram over a finite set $I$. Let $X$ be a $3$-simply connected graph. Assume that every ball of radius $3$ in $X$ is isomorphic to a ball of radius $3$ in a graph admitting a geometry of type $M$. Then $X$ admits a geometry of type $M$.
\end{prop}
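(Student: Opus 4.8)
The plan is to reconstruct the labeling $\tau\colon X\to I$ globally by first doing it locally and then showing the local pieces glue, using the hypothesis that $X$ is $3$-simply connected. Observe first that a geometry of type $M$ on a graph $W$ determines, and is almost determined by, the way adjacent vertices are ``colored by the difference of labels''; more precisely, in the $\widetilde A_{d-1}$ case (and in the general Coxeter case the combinatorics are analogous), knowing the labels on a ball of radius $1$ is equivalent to knowing, for each oriented edge, an element of $I$ (the label of the head when the tail is declared to be $0$, or more intrinsically the labeling up to the $\Isom$-action), and this edge-coloring is forced on every ball of radius $3$ by the isomorphism type of that ball: indeed, in a geometry of type $M$ the colors of two edges sharing a vertex are constrained by the type of the $2$-residues (the generalized $M(i,j)$-gons), and radius $3$ is enough to see these constraints. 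So the first step is: \emph{for every vertex $v\in X$, the local geometry on the ball $B(v,3)$ — which exists by hypothesis — restricts to a geometry on $B(v,1)$, and any two such local geometries agree (up to the global symmetry group of $M$) on overlaps of balls of radius $1$.} Concretely I would fix, once and for all, an identification of each $B(v,3)$ with a ball in a model geometry; this endows each oriented edge of $X$ with a color in $I$ (well-defined up to a symmetry of $M$), and the radius-$3$ hypothesis guarantees these edge-colors are consistent along any path of length $\leq 3$.

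The second and main step is the gluing. Having an $I$-valued (or rather, $(\Z/d\Z)$-torsor-valued, with the ambiguity group $=$ symmetries of $M$) coloring $c$ of oriented edges that is locally consistent, I want to produce $\tau\colon X\to I$ with $c(x\to y)=\tau(y)-\tau(x)$. Fix a basepoint $o$ with $\tau(o)=0$; for any vertex $x$ choose a path $o=x_0,x_1,\dots,x_n=x$ and set $\tau(x)=\sum c(x_{i-1}\to x_i)$. Well-definedness means this sum is independent of the path, i.e.\ that $c$ integrates to $0$ around every cycle. Since $X$ is simply connected at scale $3$ — meaning the fundamental group is generated by cycles of length $\leq 3$ — it suffices to check that $c$ integrates to $0$ around every cycle of length at most $3$; but such a cycle lies inside a single ball $B(v,3)$, on which $c$ is the edge-coloring coming from an actual geometry of type $M$, for which the ``integration'' is tautologically consistent (it comes from a genuine labeling of $B(v,3)$). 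Hence $\tau$ is well-defined. One must be slightly careful about the ambiguity group (the symmetries of $M$): passing from $B(v,3)$ to $B(v',3)$ one may change the chosen model identification by a symmetry of $M$, so $c$ is really defined on a covering; I would handle this by noting that the monodromy of this covering is again generated by loops of length $\leq 3$ (by $3$-simple connectedness), each of which is trivial because it lives in one ball — so after passing to a connected component one gets an honest $I$-valued $c$, equivalently a consistent choice of model on all balls simultaneously. I expect this bookkeeping of the symmetry-group ambiguity to be the main technical obstacle, more so than the integration argument itself.

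The third step is to verify that the $\tau$ produced is actually a geometry of type $M$, not merely a proper coloring. The defining conditions (1)–(3) are all \emph{local}: nonemptiness, connectedness, and being a generalized $M(i,j)$-gon for $2$-residues are all properties of residues of flags, and a flag of type $J$ with its residue is contained in a ball of radius bounded by $|I|$ around any of its vertices — and in fact, since conditions (1)–(3) for a geometry of type a finite Coxeter diagram $M$ can be checked within balls of radius $3$ (the worst case being the girth/diameter condition for a generalized $m$-gon, which for the relevant small values of $m$ — here $m\in\{2,3\}$ — is visible in radius $3$; in general one takes $R=R(M)$ and assumes balls of radius $R$ rather than $3$, but the proposition as stated commits to $3$, appropriate for $\widetilde A_{d-1}$), each such condition for $(X,\tau)$ follows from the same condition in the model geometry whose radius-$3$ ball is isomorphic to the relevant ball of $X$. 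Since $\tau$ restricted to each $B(v,3)$ agrees (up to symmetry of $M$, which preserves all the conditions) with the model labeling, all local conditions transfer, and we conclude that $(X,\tau)$ is a geometry of type $M$. \epr
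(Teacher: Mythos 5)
Your overall strategy is the same as the paper's: attach to each vertex its set of admissible local labelings, transport them between adjacent vertices, use $3$-simple connectedness to kill the monodromy on loops of length $\le 3$ (which sit inside a single radius-$3$ ball carrying a genuine geometry), and then read off a global labeling and check the axioms locally. But there is a genuine gap at the step you treat as an observation: the claim that the local labeling of a ball of radius $1$ is ``well-defined up to a symmetry of $M$'' — equivalently, that any two germs of geometry at a vertex differ by a diagram symmetry, and that two model identifications at adjacent vertices agree up to a \emph{unique} symmetry on the overlap. Without this, your ``covering'' and its transfer maps are not even well-defined, so the monodromy argument cannot start. Your one-line justification (the colors of two edges at a vertex are constrained by the $2$-residues) is not sufficient: pairwise $M$-value constraints pin down the labeling up to a symmetry only on a single maximal flag, and one must then propagate this identification from one maximal flag to another; the paper does this in Lemma \ref{lem:description_of_germs} by an induction over maximal flags through a common vertex, using nonemptiness (1) and connectedness (2) of residues, and uses Lemma \ref{lem:extension_of_geometry} (extension of a germ to a full geometry of the model) to get existence and uniqueness of the transfer between adjacent vertices. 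These lemmas are the real content behind your first step and need proof.

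Two smaller points. First, the formalism $c(x\to y)=\tau(y)-\tau(x)$ presupposes $I=\Z/d\Z$ and, even for $\widetilde A_{d-1}$, is not invariant under diagram reflections, so it cannot be taken as the basic object; your torsor/covering reformulation is the correct one (and once you have a consistent choice of germ at every vertex, the labeling is read off directly, making the separate ``integration'' step redundant). Second, your hedge that a general Coxeter diagram might require a radius $R(M)>3$ is unnecessary: all of conditions (1)--(3) concern residues of flags, and the residue of a nonempty flag consists of common neighbors of its vertices, so each condition is visible inside a ball of radius $1$ around a vertex of the flag; radius $3$ is needed only for the monodromy step (loops of length $\le 3$ together with the radius-$1$ balls around their vertices), exactly as in the paper, for every finite $M$.
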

\begin{rem} More generally if $X$ is $k$-simply connected, then $X$ admits a geometry of type $M$ if every ball of radius $\lfloor \frac{k+3}{2}\rfloor$ in $X$ is isometric to a ball of the same radius in a graph admitting geometry of type $M$.
\end{rem}

If $X$ is a graph and $x \in X$, let $V(x)$ be the graph with vertex set $\{x' \in X, d(x,x') \leq 1\}$ and same edges as in $X$. A \emph{germ of geometry of type $M$} at $x$ is a coloring $\tau \colon V(x) \to I$ such that $(V(x),\tau)$ is a geometry over $I$ and such that the conditions (1) (2) (3) hold for every flag in $V(x)$ containing $x$. Denote by $G(x)$ the set of all germs of geometry of type $\widetilde A_{d-1}$ at $x$. Observe that for a connected graph $X$ and a map $\tau \colon X\to I$, $(X,\tau)$ is a geometry of type $I$ if and only if the restriction of $\tau$ to $V(x)$ if a germ of geometry of type $M$ for every $x \in X$. It is through this observation that we will construct a suitable labelling of a graph satisfying the local properties of Proposition \ref{prop:geometry_local}.

We start by a lemma which implies that a germ of geometry of type $M$ at $x$ is characterized by its restriction to any flag of type $I$.
\begin{lem}\label{lem:description_of_germs} Let $X$ be a graph and $x \in X$. If $\tau \in G(x)$, $G(x)$ consists of all maps of the form $\sigma \circ \tau$ for a symmetry $\sigma$ of $M$. 
\end{lem}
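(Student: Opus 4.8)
The plan is to show two things: first, that every map of the form $\sigma \circ \tau$ with $\sigma$ a symmetry of $M$ is again a germ of geometry of type $M$ at $x$; and second, conversely, that any germ $\tau'$ at $x$ agrees with $\sigma \circ \tau$ for some symmetry $\sigma$. The first direction is essentially formal: the defining conditions (1), (2), (3) for a germ at $x$ only involve the graph $V(x)$ together with the values $M(i,j)$ along pairs of labels occurring on flags through $x$; precomposing the coloring with a permutation $\sigma$ of $I$ satisfying $M(\sigma i,\sigma j)=M(i,j)$ transports each condition to itself. So $\sigma\circ\tau\in G(x)$ whenever $\tau\in G(x)$.

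For the converse, fix $\tau,\tau'\in G(x)$. The key structural input is that, in a geometry over $I$ with $|I|=:n$, the ball $V(x)$ around a vertex $x$ with $\tau(x)=i_0$ is heavily constrained: since adjacent vertices carry distinct labels and (by condition (1) applied to flags through $x$ of every type $J$ with $I\setminus J$ of size $\geq 1$) there exist flags through $x$ of \emph{every} type containing $i_0$, the vertex $x$ lies in a flag of type $I$ inside $V(x)$, i.e. there is a clique $\{x=z_{i_0},z_{i_1},\dots\}$ with $\tau$ taking each value of $I$ exactly once on it. First I would establish that $\tau'$ restricted to such a maximal flag $F$ through $x$ is determined by $\tau|_F$ up to a permutation of $I$: both colorings are bijections $F\to I$, so $\sigma:=\tau'\circ(\tau|_F)^{-1}$ is a well-defined permutation of $I$, and I claim $\sigma$ is a symmetry of $M$ and that $\tau'=\sigma\circ\tau$ on all of $V(x)$.

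To see that $\sigma$ is a symmetry of $M$: for $i\neq j$ in $I$, look at the residue in $V(x)$ of the flag $F\setminus\{z_i,z_j\}$ (of type $I\setminus\{i,j\}$) — by condition (3) this residue, computed for the germ $\tau$, is a generalized $M(i,j)$-gon, hence it has a well-defined integer invariant $m$, its half-diameter/half-girth, equal to $M(i,j)$; computing the same residue with respect to $\tau'$ forces it to be a generalized $M(\sigma i,\sigma j)$-gon, so $M(i,j)=M(\sigma i,\sigma j)$. To see $\tau'=\sigma\circ\tau$ everywhere on $V(x)$: any vertex $y\in V(x)\setminus\{x\}$ with $\tau(y)=i$ lies in a flag of type $I$ through $x$ (extend $\{x,y\}$ using condition (1) repeatedly), and the label of $y$ — both for $\tau$ and for $\tau'$ — is pinned down by which vertex of the maximal flag $F$ it can be "exchanged with" while staying in a common flag; concretely, for two flags of type $I$ through $x$ sharing all but one vertex, the differing vertices carry the same label (this is forced because the remaining $n-1$ shared vertices already use up $I\setminus\{i\}$). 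Chaining such elementary exchanges (which is possible because the residue of any type-$(I\setminus\{i\})$ flag through $x$ is nonempty, so flags of type $I$ through $x$ form a connected "flip graph") shows that for \emph{every} $y\in V(x)$ the value $\tau'(y)$ is obtained from $\tau(y)$ by the \emph{same} permutation $\sigma$ that $F$ already witnesses.

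The main obstacle I anticipate is the bookkeeping in this last step: making precise the "flip graph" of maximal flags through $x$ and checking its connectivity, so that the locally-defined agreement $\tau'=\sigma\circ\tau$ on one flag propagates to all of $V(x)$. One clean way to organize it: reduce to showing that for each label $i$, the set $\tau^{-1}(i)\cap V(x)$ equals $(\tau')^{-1}(\sigma i)\cap V(x)$, arguing vertex by vertex via a maximal flag containing that vertex and $x$; the residue conditions (1)–(3) of a germ are exactly what guarantee every vertex of $V(x)$ sits in such a flag and that any two such flags are connected by elementary flips fixing the label of the vertex in question. Once that is in place, both the "only if" (a germ is $\sigma\circ\tau$) and the characterization "$G(x)=\{\sigma\circ\tau:\sigma\text{ a symmetry of }M\}$" follow immediately.
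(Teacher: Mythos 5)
Your plan follows essentially the same route as the paper: restrict the two germs to a flag of type $I$ through $x$, use condition (3) on corank-two residues to see that the induced bijection of $I$ respects $M$, and then propagate the identity $\tau'=\sigma\circ\tau$ over all of $V(x)$ by showing independence of the chosen maximal flag. Two caveats. First, your symmetry step invokes condition (3) for the residue of $F\setminus\{z_i,z_j\}$ for \emph{all} pairs $i\neq j$; but a germ only guarantees (1)--(3) for flags \emph{containing} $x$, so when $z_i=x$ (i.e.\ $i=\tau(x)$) that flag is not covered by the hypothesis, and you only obtain $M(i,j)=M(\sigma i,\sigma j)$ for $i,j\neq\tau(x)$. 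The paper is careful to state exactly this restricted equality (for $z_1\neq z_2\in Z\setminus\{x\}$) and then passes to the assertion that the induced bijection is a symmetry of $M$; as written, your blanket use of (3) is not available from the definition of a germ, so you would need either the paper's formulation or a separate argument that preserving $M$ on pairs avoiding one label forces a full symmetry (true for $\widetilde A_{d-1}$, where the restriction is an isomorphism of paths extending to an automorphism of the cycle).

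Second, the ``flip-graph connectivity'' of type-$I$ flags through $x$, which you correctly identify as the main remaining bookkeeping, is precisely the content of the paper's argument and is not automatic: it is proved by downward induction on $|Z\cap Z'|$, using (2) to join $z\in Z\setminus Z'$ to $z'\in Z'\setminus Z$ by a path inside the residue of the flag $Z\cap Z'$ (which contains $x$), and (1) to extend each edge of that path, together with $Z\cap Z'$, to a flag of type $I$ containing $x$; since two type-$I$ flags sharing all but one vertex induce the same permutation of $I$ (your ``elementary exchange'' observation, which is correct), the induction gives that the permutation is independent of the flag, and evaluating on a type-$I$ flag through $x$ containing a given $y\in V(x)$ finishes the proof. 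So your outline is sound and matches the paper's strategy, but these two completions --- the restricted use of (3) near $x$ and the inductive connectivity argument --- are exactly what must be supplied to turn the plan into a proof.
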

\begin{proof}
It is clear that $\sigma \circ \tau \in G(x)$ if $\sigma$ is a symmetry of $M$. 

To see the converse take $\tau' \in G(x)$. Let $Z$ be a flag of type $I$ containing $x$ (such a flag exists by (1)). Then for every $z_1 \neq z_2 \in Z \setminus \{x\}$, by looking at the residue of $Z \setminus \{z_1,z_2\}$ we obtain from condition (3) that $M(\tau(z_1),\tau(z_2))=M(\tau'(z_1),\tau'(z_2))$. This implies that there exists a unique symmetry $\sigma_Z$ of $M$ such that $\tau'(z) = \sigma_Z \circ \tau(z)$ for all $z \in Z$. 
We claim that $\sigma_Z = \sigma_{Z'}$ for every pair of flags $Z$ and $Z'$ of cardinality $d$ containing $x$. The claim is proved by downwards induction on the cardinality of $Z\cap Z'$. The case when $|Z \cap Z'| = |I|$ or $|Z \cap Z'| = |I|-1$ is obvious. Assuming that the claim is valid when $|Z \cap Z'|=k \leq |I|-1$, let us prove the claim when $|Z\cap Z'| = k-1$. Pick $z \in Z\setminus Z'$ and $z' \in Z'\setminus Z$. By (2) there is a path $z_0,\dots,z_n$ contained in the residue of $Z \cap Z'$ such that $z_0=z$ and $z_n = z'$. By (1) for each $i=0,\dots,n-1$ there is a flag $Z_i$ of type $I$ containing $(Z\cap Z') \cup \{z_i,z_{i+1}\}$, and by induction hypothesis applied to $Z_i$ and $Z_{i+1}$ we have $\sigma_{Z_i} = \sigma_{Z_{i+1}}$ for each $0\leq i \leq n-1$. This proves that $\sigma_Z = \sigma_{Z'}$. Hence $\sigma_Z$ does not depend on $Z$, which proves the lemma.
\end{proof}
From it we deduce the following
\begin{lem}\label{lem:extension_of_geometry} Let $X$ be a graph admitting a geometry of type $M$ and $x$ a vertex in $X$. For every germ $\tau$ of geometry of type $M$ at $x$, there exists $\widetilde \tau \colon X \to I$ which extends $\tau$ and such that $(X,\widetilde \tau)$ is a geometry of type $M$.
\end{lem}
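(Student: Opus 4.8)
The plan is to reduce the statement entirely to Lemma~\ref{lem:description_of_germs}, together with the elementary fact that a symmetry of $M$ transports a geometry of type $M$ to another geometry of type $M$. First I fix, once and for all, a coloring $\tau_0 \colon X \to I$ for which $(X,\tau_0)$ is a geometry of type $M$; this exists by hypothesis. By the observation recorded just before Lemma~\ref{lem:description_of_germs}, the restriction $\tau_0|_{V(x)}$ is a germ of geometry of type $M$ at $x$, so the set of such germs is nonempty. Now let $\tau$ be an arbitrary germ of geometry of type $M$ at $x$. Applying Lemma~\ref{lem:description_of_germs} with $\tau_0|_{V(x)}$ as base germ, I obtain a symmetry $\sigma$ of $M$ such that $\tau = \sigma \circ \tau_0|_{V(x)}$. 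I then simply set $\widetilde\tau := \sigma \circ \tau_0 \colon X \to I$, which by construction restricts to $\tau$ on $V(x)$.

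The remaining point is to verify that $(X,\widetilde\tau)$ is again a geometry of type $M$, i.e.\ that post-composing a geometry of type $M$ by a symmetry of $M$ preserves all defining conditions. The key observation is that a flag of $(X,\widetilde\tau)$ and its residue are, as subgraphs of $X$, literally the same objects as for $(X,\tau_0)$: only the labels get permuted by $\sigma$. Concretely, a flag $Z$ has type $J$ with respect to $\widetilde\tau$ if and only if it has type $\sigma^{-1}(J)$ with respect to $\tau_0$, and the same graph $Y$ is its residue in both descriptions. Since $\sigma$ is a bijection of $I$, we have $|I\setminus J| = |I \setminus \sigma^{-1}(J)|$, so conditions (1) and (2) for $(X,\widetilde\tau)$ follow from those for $(X,\tau_0)$. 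For condition (3), if $J = I\setminus\{i,j\}$ then $\sigma^{-1}(J) = I \setminus\{\sigma^{-1}(i),\sigma^{-1}(j)\}$, and because $\sigma$ is a symmetry of $M$ we have $M(\sigma^{-1}(i),\sigma^{-1}(j)) = M(i,j)$; hence the residue of $Z$ is a generalized $M(i,j)$-gon precisely because it is a generalized $M(\sigma^{-1}(i),\sigma^{-1}(j))$-gon for $\tau_0$. Finally, adjacent vertices still receive distinct $\widetilde\tau$-labels since $\sigma$ is injective. This completes the verification.

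There is essentially no genuine obstacle here: the entire combinatorial substance has already been isolated in Lemma~\ref{lem:description_of_germs}, which says that the symmetry group of $M$ acts transitively on germs at a fixed vertex once one germ exists. The present lemma adds only the (routine but necessary) observation that this symmetry action is compatible with the \emph{global} axioms of a geometry of type $M$. The one place that warrants a moment's care is checking that the germ-level conditions (1)--(3) for flags through $x$ are exactly the hypotheses under which Lemma~\ref{lem:description_of_germs} is stated, which is precisely how the notion of germ was defined.
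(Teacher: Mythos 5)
Your proof is correct and is essentially identical to the paper's: both take a global geometry $\tau_0$, apply Lemma \ref{lem:description_of_germs} to write $\tau=\sigma\circ\tau_0|_{V(x)}$ for a symmetry $\sigma$ of $M$, and set $\widetilde\tau=\sigma\circ\tau_0$. The only difference is that you spell out the (routine) verification that composing with a symmetry of $M$ preserves the axioms of a geometry of type $M$, which the paper leaves implicit.
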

\begin{proof} 
Let $\tau_0 \colon X \to I$ such that $(X,\tau_0)$ is a geometry of type $I$. By Lemma \ref{lem:description_of_germs}, there is a symmetry $\sigma$ of the diagram $M$ such that $\tau$ coincides with $\sigma \circ \tau_0$ on $V(x)$. Then $\widetilde \tau = \sigma \circ \tau_0$ extends $\tau$ and satisfies that $(X,\widetilde \tau)$ is a geometry of type $M$. This shows the existence. 
\end{proof}

With these two lemmas we can proceed to the proof of Proposition \ref{prop:geometry_local}. The argument is the same as for the proof of \cite[Theorem C]{ST15}. Since every ball of radius $3$ in $X$ is isometric to a ball in a graph admitting a geometry of type $M$, $G(x)$ is nonempty for every $x \in X$, and it follows from Lemma \ref{lem:extension_of_geometry} that if $x,x'$ are two neighbors in $X$, then for every $\tau \in G(x)$, there is $\tau' \in G(x')$ which coincides with $\tau$ on $V(x) \cap V(x')$. It is unique because by Lemma \ref{lem:description_of_germs}, $\tau'$ is determined by its value on any complete subgraph with $d$ vertices containing $x'$ (and there is such a subgraph containing both $x$ and $x'$ by (1)). This defines a bijection that we denote $F_{x,x'} \colon G(x) \to G(x')$. 

For every path $\gamma=(x_0,\dots,x_n)$ of adjacent vertices, we can define a bijection $F_\gamma \colon G(x_0) \to G(x_n)$ by composing the bijections $F_{x_i,x_{i+1}}$ along $\gamma$. We claim that $F_\gamma$ only depends on the endpoints $x_0$ and $x_n$. Since $X$ is $3$-simply connected, we only have to check that $F_\gamma$ is the identity of $A(x_0)$ if $\gamma$ is a path of length $n \leq 3$ with $x_0=x_n$. This property clearly holds if $X$ admits a geometry of type $M$, and hence also in $X$ because $\cup_{k \leq n} V(x_i)$ (and all its edges) is contained in the ball of radius $3$ around $x_0$, which is isometric to a ball of radius $3$ in a graph admitting a geometry of type $M$.

It remains to fix a vertex $x_0 \in X$ and $\tau_0 \in G(x_0)$. For every other vertex $x$, let $\tau_x \in G(x)$ be the common value of $F_{\gamma}(\tau_0)$ for all paths $\gamma$ from $x_0$ to $x$. Define $\tau(x) = \tau_x(x)$. Since for adjacent edges $x,x'$, $\tau_{x'} = F_{x,x'}(\tau_x)$ coincides with $\tau_x$ on $V(x) \cap V(x')$, we see that $\tau$ coincides with $\tau_x$ on $V(x)$. In particular the restriction of $\tau$ to $V(x)$ belongs to $G(x)$ for all $x \in X$. This means that $(X,\tau)$ is a geometry of type $\widetilde A_{d-1}$. This concludes the proof of Proposition \ref{prop:geometry_local}.

We deduce the following 
\begin{cor}\label{cor:buildings_local} Let $d \geq 4$. Let $Y$ be a $3$-simply connected graph which is $3$-locally $X_d(K)$ for some non-archimedean local field $K$. Then $Y$ is isometric to $X_d(F)$ for a (unique up to isomorphism) non-archimedean local field $F$.
\end{cor}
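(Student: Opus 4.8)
The plan is to first promote the local hypothesis into a global combinatorial structure on $Y$ via Proposition \ref{prop:geometry_local}, and then to feed the result into Tits's theory of affine buildings. First I would apply Proposition \ref{prop:geometry_local} with $M=\widetilde A_{d-1}$ over the finite set $I=\Z/d\Z$: by hypothesis every ball of radius $3$ in $Y$ is isometric to a ball of radius $3$ in $X_d(K)$, and $X_d(K)$ carries a geometry of type $\widetilde A_{d-1}$, so the hypotheses of the proposition hold. We therefore obtain a coloring $\tau\colon Y\to\Z/d\Z$ such that $(Y,\tau)$ is a geometry of type $\widetilde A_{d-1}$.

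Next I would verify that $(Y,\tau)$ is simply connected as a geometry, i.e.\ that its flag complex $\Delta$ is simply connected. The $1$-skeleton of $\Delta$ is $Y$, and since the flags of size three in $(Y,\tau)$ are precisely the $3$-cliques of $Y$, the $2$-skeleton of $\Delta$ is $Y$ with a $2$-cell glued along each triangle; higher-dimensional cells do not change $\pi_1$. Hence $\pi_1(\Delta)$ is the quotient of $\pi_1(Y)$ by the normal closure of the cycles of length at most $3$, which is trivial because $Y$ is $3$-simply connected. By the theorem of Tits \cite[Theorem 1.3]{Tits} quoted above, a simply connected geometry of type $\widetilde A_{d-1}$ is the $1$-skeleton of a building of type $\widetilde A_{d-1}$; being locally isometric to the thick, locally finite building $X_d(K)$, this building is itself thick and locally finite.

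Finally, since $d\geq 4$ this building has dimension $d-1\geq 3$, so Tits's classification of thick irreducible affine buildings of dimension at least $3$ applies and identifies it with the Bruhat--Tits building $\widetilde A_{d-1}(D,w)$ of a (possibly non-commutative) field $D$ equipped with a discrete valuation $w$; equivalently $Y\cong X_d(D)$. Local finiteness forces the residue field of $D$ to be finite, hence the completion $F=\widehat D$ is a non-archimedean local field in the sense of this paper; moreover $X_d(D)\cong X_d(F)$ because the extension-of-scalars map from $\mathcal O_D$-lattices in $D^d$ to $\widehat{\mathcal O}$-lattices in $F^d$ is a bijection (surjectivity uses the density of $D^d$ in $F^d$ to put a basis of a given lattice inside $D^d$). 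Thus $Y\cong X_d(F)$, and uniqueness of $F$ up to isomorphism is precisely the injectivity of $K\mapsto X_d(K)$, the deep theorem of Tits recalled in the introduction. The main obstacle I anticipate is this last appeal to the classification of higher-dimensional affine buildings: it is exactly where $d\geq 4$ is needed (for $d=3$, type $\widetilde A_2$, there are exotic buildings not of the form $X_3(K)$), and one must be careful to extract a genuine non-archimedean local field rather than merely a discretely valued division ring.
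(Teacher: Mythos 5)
Your proposal is correct and follows essentially the same route as the paper: Proposition \ref{prop:geometry_local} produces a geometry of type $\widetilde A_{d-1}$ on $Y$, Tits's local characterization identifies $Y$ with the $1$-skeleton of a locally finite building of that type, the classification of thick affine buildings of dimension $\geq 3$ (this is where $d\geq 4$ enters) gives $Y\cong X_d(F)$, and uniqueness of $F$ comes from the fact that the building determines the field (the paper phrases this via $PG(d-1,F)$ and the fundamental theorem of projective geometry, which is the cleaner citation than the introduction's injectivity claim, since the latter is itself justified by this corollary). Your extra care about simple connectivity of the flag complex and about completing the discretely valued division ring are details the paper leaves implicit, not a divergence in method.
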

\begin{proof} From the discussion before Proposition \ref{prop:geometry_local}, $Y$ is isometric to the $1$-skeleton of a locally finite building of type $\widetilde A_{d-1}$ (and this holds whenever $d \geq 3$). By a theorem of Tits, this forces $Y$ to be isometric to $X_d(F)$ for some $F$ if $d\geq 4$. Moreover, $X_d(F)$ determines the projective space $PG(d-1,F)$ up to collineation, which determines $F$ up to isomorphism by the fundamental theorem of projective geometry. See \cite[page 137]{Ronan}, or \cite[Corollaire 15]{Tits86} and \cite[Theorem 6.3]{Tits74}.
\end{proof}

\section{Proof of Theorem \ref{thm:main}}

The map $K \mapsto X_d(K)$ is continuous by  Corollary \ref{cor:Tits}. It is injective by the theorems of Tits recalled in the proof of Corollary \ref{cor:buildings_local}. Let us show that it is proper, that is that if the cardinality of the residue field $K_n$ goes to $\infty$, then the degree in $X_d(K_n)$ also. But this holds because (Lemma \ref{lem:building_from_residue_ring}) the degree in $X_d(K)$ is equal to the number of linear subspaces of dimension $\neq 0,d$ in\footnote{Precisely, if the residue field of $K$ is $\F_q$, the degree equals $\Pi_{i=1}^d(q^i-1)/(q-1)=\Pi_{i=1}^d (\sum_{v=0}^{i-1}q^v)$, which is a strictly increasing function of $q$.} $(\mathcal O/\pi \mathcal O)^d$. This implies (\ref{item:continuity}): $K \mapsto X_d(K)$ is a homeomorphism on its image.

The first part of Theorem \ref{thm:main} follows from Theorem \ref{thm:distance_nalf} and (\ref{item:continuity}).

Let us now prove the second half of Theorem \ref{thm:main}. Let $d \geq 4$ and $K$ be a field of characteristic $0$. By Theorem  \ref{thm:distance_nalf} and (\ref{item:continuity}), there exists $R>0$ such that if $K'$ is another non-archimedean local field such that $X_d(K')$ is $R$-locally $K$, then $K'$ is isomorphic to $K$, and in particular $X_d(K')$ is isometric to $X_d(K)$. By Corollary \ref{cor:buildings_local} this implies that if $Y$ is a $3$-simply connected graph which is $\max(3,R)$-locally $X_d(K)$, it is isometric to $X_d(K)$. By \cite[Proposition 1.5]{ST15} $X_d(K)$ is LG-rigid.

\end{document}